\definecolor{labelkey}{rgb}{0,0,1}
\theoremstyle{plain}
\newtheorem{THEOREM}{Theorem}[section]
\newtheorem{theorem}[THEOREM]{Theorem}
\newtheorem{corollary}[THEOREM]{Corollary}
\newtheorem{lemma}[THEOREM]{Lemma}
\newtheorem{proposition}[THEOREM]{Proposition}
\theoremstyle{definition}
\theoremstyle{remark}
\newtheorem{remark}[THEOREM]{Remark}
\newtheorem{example}[THEOREM]{Example}
\newtheorem{conjecture}[THEOREM]{Conjecture}
\newcommand{\thm}[1]{Theorem~\ref{#1}}
\newcommand{\lem}[1]{Lemma~\ref{#1}}
\newcommand{\cor}[1]{Corollary~\ref{#1}}
\newcommand{\prop}[1]{Proposition~\ref{#1}}
\newcommand{\sect}[1]{Section~\ref{#1}}
\newcommand{\N}{\ensuremath{\mathbb{N}}}   
\newcommand{\Q}{\ensuremath{\mathbb{Q}}}   
\newcommand{\R}{\ensuremath{\mathbb{R}}}   
\newcommand{\T}{\ensuremath{\mathbb{T}}}   
\def \b {\beta}
\def \d {\delta}
\def \e {\varepsilon}
\def \n {\nabla}
\def \t {\tau}
\def \o {\omega}
\def \L {\Lambda}
\def \O {\Omega}
\def \bq {{\bf q}}
\def \bv {{\bf v}}
\def \bx {{\bf x}}
\def \bzero {{\bf 0}}
\def \bF {{\bf F}}
\def \cC {\mathcal{C}}
\def \cE {\mathcal{E}}
\def \cH {\mathcal{H}}
\def \cI {\mathcal{I}}
\def \cK {\mathcal{K}}
\def \cP {\mathcal{P}}
\def \cV {\mathcal{V}}
\def \loc {\mathrm{loc}}
\DeclareMathOperator{\supp}{supp} %
\DeclareMathOperator{\diver}{div} %
\DeclareMathOperator{\dist}{dist} %
\def \ss {\subset}
\def \GL {Gr\"onwall's Lemma}
\renewcommand{\geq}{\geqslant}
\renewcommand{\leq}{\leqslant}
\def \dr  {\, \mbox{d}r}
\def \ds  {\, \mbox{d}s}
\def \dtau  {\, \mbox{d}\tau}
\def \ddt  {\frac{\mbox{d\,\,}}{\mbox{d}t}}
\begin{document}

\title[]{Generic alignment conjecture for systems of Cucker-Smale type}

\author{Roman Shvydkoy}

\address{851 S Morgan St, M/C 249, Department of Mathematics, Statistics and Computer Science, University of Illinois at Chicago, Chicago, IL 60607}

\email{shvydkoy@uic.edu}

\subjclass{37A60, 92D50}

\date{\today}

\keywords{emergence,  collective behavior, alignment, Cucker-Smale system, Poincare recurrence}

\begin{abstract} The generic alignment conjecture states that for almost every initial data on the torus solutions to the Cucker-Smale system with a strictly local communication align to the common mean velocity. In this note we present a partial resolution of this conjecture using a statistical mechanics approach. First, the conjecture holds in full for the sticky particle model representing, formally, infinitely strong local communication. In the classical case the conjecture is proved when $N$, the number of agents, is equal to $2$. It follows from a more general result stating that for a system of any size for almost every data at least two agents align.

The analysis is extended to the open space $\R^n$ in the presence of confinement and potential interaction forces. In particular, it is shown that almost every non-oscillatory pair of solutions aligns and aggregates in the potential well. 
\end{abstract}

\maketitle 

\section{Introduction}

The problem of emergence refers to appearance of patterns in systems with self-organization governed by local rules of communication, see \cite{ABFHKPPS,MT2014,Sbook} for extensive surveys.  For models incorporating alignment forces, it means convergence to a common velocity vector (or consensus in the interpretation of opinion dynamics),
\begin{equation}\label{}
\max_{i=1,\ldots,N} |v_i(t) - \bar{v}| \to 0, \text{ as } t \to \infty.
\end{equation}
In this note we address the question in the context of the Cucker-Smale system introduced in \cite{CS2007a,CS2007b}
\begin{equation}\label{e:CS}
\begin{split}
\dot{x}_i & = v_i, \quad x_i \in \O \\ 
\dot{v}_i & = \frac1N \sum_{j=1}^N \phi(x_i-x_j)(v_j - v_i ), \quad v_i\in \R^n,
\end{split}
\end{equation}
where $\phi \in C^1$ is a radially decreasing kernel, and $\O$ is an `environment', typically $\R^n$ or a compact manifold. In the open space case $\O = \R^n$ the following criterion was proved in  \cite{CS2007a} and further refined and extended in \cite{HT2008,HL2009,CFRT2010,TT2014,DS2019}.

\begin{theorem}\label{t:CSinto}
Suppose $\phi$ has heavy tail: $\int_0^\infty \phi(r) \dr = \infty$. Then all solutions to \eqref{e:CS} align exponentially fast to the mean velocity $\bar{v} =\frac{1}{N} \sum_{j=1}^N v_j$, while the flock remains bounded  
\[
\max_{i=1,\ldots,N} |v_i - \bar{v}| \leq C e^{-\d t}, \qquad \max_{i,j=1,\ldots,N} |x_i - x_j| \leq \bar{D}.
\]
The heavy-tail condition is sharp. 
\end{theorem}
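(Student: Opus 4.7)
The plan is to follow the now-classical Ha--Liu / Ha--Tadmor energy-method argument built around two scalar monitors: the position diameter
\[
X(t) = \max_{i,j} |x_i(t) - x_j(t)|, \qquad V(t) = \max_{i,j}|v_i(t) - v_j(t)|,
\]
together with the obvious conservation $\dot{\bar v} = 0$, which follows from antisymmetrizing the right-hand side of \eqref{e:CS} under $i \leftrightarrow j$. Because each $\dot v_i$ is a convex combination of displacements $v_j - v_i$, the velocity configuration stays inside the convex hull of the initial velocities; in particular $V$ is non-increasing and controls the fluctuation from $\bar v$.

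First I would derive a differential system for $(X,V)$. The bound $\dot X \leq V$ is immediate from $|\dot x_i - \dot x_j| = |v_i - v_j|$. For the velocity side, pick indices $a,b$ realizing (a.e.\ in $t$, by Rademacher) $V(t) = |v_a - v_b|$ and compute
\[
\tfrac{1}{2}\tfrac{d}{dt}|v_a - v_b|^2 = \tfrac{1}{N}\sum_{j}\bigl[\phi(x_a - x_j)(v_j - v_a) - \phi(x_b - x_j)(v_j - v_b)\bigr]\cdot(v_a - v_b).
\]
Using radial monotonicity of $\phi$ together with $|x_a - x_j|, |x_b - x_j| \le X(t)$, one rearranges the sum to a weighted sum of squared differences $|v_i - v_k|^2 \leq V^2$ with total weight bounded below by $\phi(X(t))$, yielding
\[
\dot V \;\leq\; -\,\phi(X(t))\, V.
\]

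Combining the two inequalities eliminates $t$:
\[
\frac{dV}{dX} \;\leq\; -\phi(X) \qquad \Longrightarrow \qquad V(t) + \int_{X(0)}^{X(t)} \phi(r)\,dr \;\leq\; V(0).
\]
Since $V \geq 0$ and $\int_0^\infty \phi(r)\,dr = \infty$, this forces $X(t) \leq \bar D$ for a constant $\bar D$ depending only on the initial data and $\phi$. Plugging the uniform bound back into $\dot V \leq -\phi(X(t)) V \leq -\phi(\bar D) V$ produces the exponential decay $V(t) \leq V(0)e^{-\delta t}$ with $\delta = \phi(\bar D) > 0$, and since $|v_i - \bar v| \leq V$ the claim follows.

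The main technical obstacle is establishing the clean dissipation inequality $\dot V \leq -\phi(X) V$ rigorously: $V$ is only Lipschitz, so one must work with the Dini derivative along envelopes of smooth quantities $|v_a - v_b|^2$ and handle the index switches carefully. For sharpness of the heavy-tail condition, I would exhibit a two-agent counterexample with $\phi$ compactly supported (or integrable with fast decay) and sufficiently large relative initial speed: direct integration of the one-dimensional reduced system shows $X(t) \to \infty$ in finite communication budget, so the relative velocity stabilizes to a nonzero limit and alignment fails.
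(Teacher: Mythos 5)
Theorem~\ref{t:CSinto} is quoted in the paper as background from \cite{CS2007a,HT2008,HL2009,CFRT2010,TT2014,DS2019} and is not proved there, so there is no in-paper proof to compare against; your proposal reproduces the standard Ha--Liu/Ha--Tadmor argument from those references (diameters $X,V$, the dissipation bound $\dot V\leq-\phi(X)V$ via a maximizing pair and radial monotonicity, the Lyapunov functional $V+\int_{X(0)}^{X}\phi$ giving $X\leq\bar D$ under the heavy-tail condition, then exponential decay with rate $\phi(\bar D)$, and a two-agent dispersing counterexample for sharpness). This is correct and is essentially the accepted proof; the only point to make fully rigorous is the one you already flag, namely the Dini-derivative treatment of $V$ and the sign facts $(v_j-v_a)\cdot(v_a-v_b)\leq 0$, $(v_j-v_b)\cdot(v_a-v_b)\geq 0$ for a maximizing pair $(a,b)$, which justify inserting $\phi(X)$ as a lower bound on the weights.
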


In what follows we seek to analyze the situation when the kernel is completely local, i.e.
\begin{equation}\label{e:local}
\supp \phi = B_{r_0}(0),  \text{ for } r_0>0.
\end{equation}
In the open space the counterexample to \thm{t:CSinto} is obvious -- two agents initially separated by a distance larger than $r_0$ and sent in the opposite directions will never come to alignment. It is therefore more natural to address the problem either in the presence of a confinement force (or other constraining mechanisms such as mutual attraction) or in the context of a compact environment such as periodic domain $\O = \T^n$. Still, under completely local protocol \eqref{e:local} there always exists a class of solutions that consist of disconnected and   misaligned agents.  For example, on $\T^n$ one can consider a pair of agents $x_1,x_2$ revolving around two parallel geodesics at a distance $>r_0$ with different velocities. In the case of a confinement force $F = - \n U(x)$ agents may satisfy decoupled Hamiltonian systems
\begin{equation}\label{e:Hintro}
\dot{x}_i=v_i, \qquad \dot{v}_i = - \n U(x_i), 
\end{equation}
without communicating. All these examples are exceptional because they are either described by oscillatory dynamics without alignment \eqref{e:Hintro} or as in the case of the torus, fill a set of measure zero in the ensemble space $\T^{nN} \times \R^{nN}$.

Our approach here is largely motivated by the latter example. We adopt the methodology of the statistical mechanics where solutions to \eqref{e:CS} are viewed as trajectories in the ensemble space
\[
\bx = (x_1,\ldots, x_N)\in \T^{nN}, \quad \bv = (v_1, \ldots, v_N) \in \R^{nN}.
\]
Generic long time behavior of the system will be viewed relative to the classical Lebesgue measure on  $\T^{nN} \times \R^{nN}$.  In the example presented above agents do not interact, and therefore they follow a Euclidean line periodized on the torus 
\begin{equation}\label{e:Euclidean}
\bx(t) = \bx_0 + t \bv_0\mod 2\pi, \quad \bv(t) = \bv_0.
\end{equation}
Since they don't fill the torus densely, leaving the open set $\cup_{i,j} \{ |x_i - x_j| <r_0\}$ unfilled, such solutions must have rationally dependent velocity coordinates of $\bv_0$, and hence form  a negligible set of data. The latter is a classical result in ergodic theory due to Kronecker,  see \cite{Oh2022} for a recent overview.
\begin{theorem}[Kronecker]\label{t:K}
The Euclidean line \eqref{e:Euclidean} densely fills a $k$-dimensional subtorus of $\T^{nN}$ where
\[
k = \dim_\Q \sum_{j=1}^{nN} \Q \bv^j_0.
\]
\end{theorem}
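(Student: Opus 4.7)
The plan is to identify the closure $H$ of the Euclidean line $L = \{\bx_0 + t\bv_0 \bmod \Z^{nN} : t \in \R\}$ inside $\T^{nN} = \R^{nN}/\Z^{nN}$ via Pontryagin duality, then read off its dimension from the arithmetic of the coordinates of $\bv_0$. Once the closure is shown to be a translate of a $k$-dimensional subtorus, density of the orbit inside that set is automatic.

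First I would translate by $\bx_0$ to reduce to $\bx_0 = 0$, so that $L$ is the image of the continuous homomorphism $\R \to \T^{nN}$, $t \mapsto t\bv_0 \bmod \Z^{nN}$. Being the continuous image of the connected group $\R$, its closure $H$ is a closed connected subgroup of $\T^{nN}$, hence a subtorus. Parameterizing characters of $\T^{nN}$ by $m \in \Z^{nN}$ through $\chi_m(\bx) = e^{2\pi i m\cdot \bx}$, the function $t \mapsto \chi_m(t\bv_0) = e^{2\pi i t (m\cdot \bv_0)}$ is identically $1$ iff $m\cdot \bv_0 = 0$. Hence the annihilator of $H$ in the character lattice is
\[
\Lambda = \{ m \in \Z^{nN} : m\cdot \bv_0 = 0 \},
\]
and Pontryagin duality yields $\dim H = nN - \mathrm{rank}_\Z \Lambda$. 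To evaluate this rank I would tensor with $\Q$: $\Lambda \otimes_\Z \Q$ equals the kernel of the $\Q$-linear functional $\Psi: \Q^{nN}\to \R$, $m \mapsto m\cdot \bv_0$, whose image is the rational span $\sum_{j=1}^{nN}\Q \bv_0^j$, of dimension $k$ by hypothesis. Rank-nullity then gives $\mathrm{rank}_\Z \Lambda = nN - k$, and therefore $\dim H = k$.

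The only subtle point is the duality step $\dim H = nN - \mathrm{rank}_\Z \Lambda$, which requires $\Z^{nN}/\Lambda$ to be torsion-free; this is where the connectedness of $H$ established in the first step is essential, since any continuous character of a connected group taking values in a finite cyclic subgroup of $\S^1$ is forced to be trivial. Everything else --- clearing denominators to identify $\Lambda \otimes \Q$ with $\ker \Psi$, and applying rank-nullity --- is routine linear algebra over $\Q$.
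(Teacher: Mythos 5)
Your argument is correct and complete, but note that the paper itself gives no proof of Theorem~\ref{t:K}: it is quoted as a classical fact of Kronecker, with a pointer to the survey of Oh, so there is no internal proof to compare against. What you supply is the standard Pontryagin-duality proof: the closure of the orbit $t\mapsto \bx_0+t\bv_0$ is a coset of a closed connected subgroup $H\leq\T^{nN}$, hence of a subtorus; the annihilator lattice $\Lambda=\{m\in\Z^{nN}:m\cdot\bv_0=0\}$ gives $\dim H=nN-\mathrm{rank}_\Z\Lambda$; and rank--nullity over $\Q$ for $\Psi(m)=m\cdot\bv_0$, whose image is $\sum_j\Q\bv_0^j$, yields $\mathrm{rank}_\Z\Lambda=nN-k$. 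Two minor remarks. First, the torsion-freeness of $\Z^{nN}/\Lambda$ that you extract from connectedness of $H$ is also immediate from the definition of $\Lambda$: if $qm\cdot\bv_0=0$ with $q\in\Z\setminus\{0\}$ then $m\cdot\bv_0=0$, so $\Lambda$ is a saturated (primitive) sublattice; either justification is fine, and in fact the dimension count $\dim H=\mathrm{rank}(\Z^{nN}/\Lambda)$ holds even without it. Second, strictly speaking the line densely fills the coset $\bx_0+H$ rather than the subtorus $H$ itself, which matches the slightly loose phrasing of the theorem; and the paper's torus is normalized modulo $2\pi$ rather than modulo $\Z^{nN}$, a harmless rescaling of your character computation.
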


Based on these observations we put forward the following conjecture (see also the discussions in \cite{S-hypo,MT2014,Tadmor-notices,DS2019}).

\begin{conjecture}[Generic Alignment Conjecture]
For almost every initial data  $(\bx_0,\bv_0) \in \T^{nN} \times \R^{nN}$ solutions to the system \eqref{e:CS} align
\[
\max_{i,j} |v_i - v_j| \to 0.
\]
\end{conjecture}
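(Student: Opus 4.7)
The plan is to recast the conjecture as a dissipation-versus-recurrence problem on the ensemble phase space $\T^{nN}\times\R^{nN}$. Passing to the zero-mean frame (using conservation of $\bar{\bv}$), alignment is equivalent to $\bv(t)\to 0$. The natural Lyapunov functional is the kinetic energy $E(t)=\frac12\sum_i|v_i|^2$, whose time derivative works out to
\[
\dot E(t)=-\frac{1}{2N}\sum_{i,j}\phi(x_i-x_j)|v_i-v_j|^2\le 0,
\]
so $E(t)\searrow E_\infty\ge 0$ and alignment amounts to $E_\infty=0$. The goal becomes showing that the bad set $B=\{(\bx_0,\bv_0):E_\infty>0\}$ has Lebesgue measure zero.

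Integrating the dissipation identity on $B$ yields the a priori bound $\int_0^\infty\sum_{i,j}\phi(x_i-x_j)|v_i-v_j|^2\dt<\infty$. The key step I would extract from this is that on $B$ the flock eventually becomes free: each pairwise communication zone $\{|x_i-x_j|<r_0\}$ is revisited for only finite total time, after which the dynamics reduce to decoupled Euclidean motion on $\T^{nN}$ with a well-defined limit velocity $\bv_\infty=\lim_{t\to\infty}\bv(t)$. At this point \thm{t:K} gives the clincher: unless the $nN$ coordinates of $\bv_\infty$ admit a nontrivial $\Q$-linear relation, the tail trajectory densely fills $\T^{nN}$ and revisits each open set $\{|x_i-x_j|<r_0\}$ with positive frequency, contradicting the dissipation bound. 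Hence on $B$ the terminal velocity $\bv_\infty$ must lie in the countable union of rational hyperplanes in $\R^{nN}$, a Lebesgue-null set.

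The $N=2$ case is where this scheme closes unconditionally. In relative coordinates $x=x_1-x_2$, $v=v_1-v_2$ the system becomes $\dot x=v$, $\dot v=-\phi(x)v$, so the direction $e_0=v_0/|v_0|$ is preserved for all time while $|v(t)|$ is non-increasing. If $|v(t)|\not\to 0$, the relative trajectory is eventually a straight ray in the fixed direction $e_0$, and the Kronecker dichotomy applies directly to $v_0$ itself: non-alignment forces $v_0$ to have $\Q$-dependent coordinates, a measure-zero condition on $\R^n$. A Fubini reduction over the trivial center-of-mass variables extends the conclusion to a.e.\ data on $\T^{2n}\times\R^{2n}$.

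The hard part — and where I expect the general conjecture to resist a direct execution of this plan — is that for $N\ge 3$ individual velocities \emph{rotate} during interactions inside a cluster of three or more mutually communicating agents, so there is no a priori fixed direction to feed into Kronecker, and the map $(\bx_0,\bv_0)\mapsto\bv_\infty$ is nonlinear enough that pulling back a Lebesgue-null set need not be null. Closing the argument requires showing that, off a set of measure zero, trajectories admit an asymptotic decomposition into freely moving subclusters with well-defined limit velocities, which amounts to ruling out persistent multi-body regimes in which the flock keeps exchanging momentum just enough to stay marginally interacting without aligning. A plausible route is a quantitative refinement of the dissipation estimate combined with a measure-theoretic Fubini argument on energy level sets, whereupon the Kronecker mechanism reduces the problem on each emerging pair of subclusters to the $N=2$ case already handled.
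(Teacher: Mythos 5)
First, note that the statement you are proving is stated in the paper as a \emph{conjecture}: the paper itself does not prove it in full, but only partial results --- the sticky-particle case (\prop{p:sticky}) and, for the classical system, \thm{t:main} (for a.e.\ data at least one pair aligns, hence the conjecture for $N=2$), proved via the Jacobian formula \eqref{e:Jacob}, a Poincar\'e-recurrence argument (\prop{p:volume}, Lemmas \ref{l:aux1}--\ref{l:aux2}) yielding $\int_0^\infty\sum_{i\neq j}\phi_{ij}\,\ds=\infty$ a.e., and \lem{l:individ} ($\cV_1$-law plus Gr\"onwall) converting divergence of $\int\phi_{12}$ into alignment of that pair. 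Your honest scope matches this: you only close $N=2$. Your $N=2$ argument is correct and genuinely different from the paper's: since $\dot v=-\phi(x)v$ for the relative variables, the relative velocity never changes direction, so a non-aligning trajectory is a time-reparametrization (with speed bounded below) of a Kronecker ray, and \thm{t:K} applies pointwise: rational independence of the coordinates of $v_1^0-v_2^0$ would force infinite total time in $\{|x_{12}|<r_0-\d\}$, hence $\int_0^\infty\phi_{12}\,\dt=\infty$ and decay of $|v_{12}|$, a contradiction; the exceptional set is a countable union of hyperplanes, null after Fubini. This is more elementary --- no recurrence theorem, no volume/Jacobian computation, no $\cV_1$ Gr\"onwall lemma --- but it is rigidly tied to $N=2$ (direction preservation fails once a third agent interacts), whereas the paper's measure-theoretic route yields the stronger \thm{t:main} for every $N$ and transfers to the confinement/interaction settings of Section 4.

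There is, however, a genuine error in your general-$N$ outline. The bound $\int_0^\infty\sum_{i,j}\phi_{ij}|v_i-v_j|^2\,\dt<\infty$ holds for \emph{every} solution (it is bounded by the initial energy, by \eqref{e:V2}), so it carries no information special to the bad set $B$, and it certainly does not imply that ``on $B$ the flock eventually becomes free'': pairs can remain within communication range for all time while their relative velocities decay, so the weighted dissipation integral stays finite although $\int\phi_{ij}\,\dt=\infty$. This is exactly the clustering scenario the paper's analysis is designed around --- its recurrence argument controls the \emph{unweighted} integral $\int\sum\phi_{ij}\,\dt$ through the volume element, and even then one only concludes that some pair aligns, which is why the paper stops at \thm{t:main} and the Clustering Conjecture rather than the full statement. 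So your concluding assessment that the $N\geq3$ case does not close by this scheme is accurate, but the ``key step'' as written should not be presented as a deduction from the dissipation identity.
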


The conjecture will be demonstrated in full for the sticky particle model, which formally corresponds to the case when $\phi = \infty $ for $r<r_0$. The character of dynamics here is of course quite different from the soft interaction model as every communication effectively results in reduction of the number of agents. The final formation would consist of a number of non-interacting clusters and the Kronecker criterion applies, see \sect{s:sticky}.  

In general, our approach will be to study volume compression under the solution map $(\bx_0,\bv_0) \to S_t(\bx_0,\bv_0)$.  The basic idea is to view alignment as convergence to the diagonal set $D = \{ \bv: v_1 = \dots = v_N \}$ which has measure zero. So, shrinking of a volume element to $0$ is necessary for alignment outcome. The Euclidean  distance of $\bv$ to $D$ is exactly the classical quadratic variation
\[
\cV_2 = \sum_{i,j=1}^N |v_i - v_j|^2
\]
which satisfies a dissipation law
\begin{equation}\label{e:V2}
\ddt \cV_2 = - 2 \sum_{i,j=1}^N \phi(x_i - x_j) |v_i - v_j|^2.
\end{equation}
So, $\cV_2$ plays the role of entropy -- a measure of disorder of the velocity field of the flock.  Using an argument based on the  Poincare Recurrence Theorem we establish the following alternative: generically either the agents are not interacting at all from the beginning or they interact sufficiently strongly to compress the volume element to zero. Since the non-interacting agents, according to Kronecker, form a negligible set, we conclude that generically at least one pair of agents keeps interacting infinitely many times during the evolution of the ensemble, and hence, will align. This, in particular, leads to the resolution of the conjecture when $N=2$, see \sect{s:CS}.

\begin{theorem}\label{t:main}
For almost every  $(\bx_0,\bv_0) \in \T^{nN} \times \R^{nN}$ the solution contains a pair of agents that align. Consequently, for $N = 2$  the Generic Alignment Conjecture holds. 
\end{theorem}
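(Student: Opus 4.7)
My plan is to work on the reduced phase space obtained by fixing the conserved mean velocity at zero and, by monotonicity of $\cV_2$ in \eqref{e:V2}, restricting to a compact forward-invariant region $M\subset\T^{nN}\times\R^{nN}$. The Cucker--Smale vector field has divergence $\diver F=-\frac{2n}{N}\sum_{i<j}\phi(x_i-x_j)\le 0$, so the flow $S_t$ is volume-contracting with Jacobian
\[
\det DS_t(z)=\exp\Big(-\tfrac{2n}{N}\int_0^t\sum_{i<j}\phi(x_i(s)-x_j(s))\,ds\Big).
\]
The rate of compression is controlled precisely by the total pairwise interaction.

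The heart of the argument is to classify each trajectory by the accumulated interaction $I(z):=\int_0^\infty\sum_{i<j}\phi(x_i(s)-x_j(s))\,ds\in[0,\infty]$ and show that the intermediate regime $I(z)\in(0,\infty)$ is Lebesgue null. If it had positive measure, restricting to $E_\e=\{I\le 1/\e\}$ would give a uniform lower bound $\det DS_t\ge e^{-2n/(N\e)}$ on $E_\e$ for every $t$. Since $M$ has finite volume, a Poincar\'e-type sweep then forces a positive-measure subset of $E_\e$ to be recurrent, $S_{t_k}z\to z$, and the monotone non-increase of $\cV_2$ along this trajectory forces $\cV_2$ to be constant, i.e.\ $\sum_{i,j}\phi(x_i-x_j)|v_i-v_j|^2\equiv 0$. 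A continuity bootstrap, using that for a.e.\ initial datum $v_i(0)\neq v_j(0)$ for every pair, then propagates $\phi(x_i(t)-x_j(t))\equiv 0$ to the entire trajectory, contradicting $I(z)>0$.

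The two remaining cases complete the argument. If $I(z)=0$ the trajectory is a pure Euclidean line $\bx(t)=\bx_0+t\bv_0$ avoiding $\cup_{i<j}\{|x_i-x_j|<r_0\}$, which by \thm{t:K} forces $\bv_0$ into a countable union of rational hyperplanes in $\R^{nN}$, hence a null set. If $I(z)=\infty$, select a pair $(i,j)$ with $\int_0^\infty\phi(x_i-x_j)\,ds=\infty$; after a sufficiently large time $T$ all pairs with finite accumulated interaction have effectively decoupled, and on the surviving interaction subgraph one invokes a persistent-connectivity alignment estimate. In the cleanest case of an isolated pair, the equation for $v_i-v_j$ reduces to $\tfrac{d}{dt}(v_i-v_j)=-\tfrac{2}{N}\phi(x_i-x_j)(v_i-v_j)$, giving
\[
|v_i(t)-v_j(t)|=|v_i(T)-v_j(T)|\exp\Big(-\tfrac{2}{N}\int_T^t\phi(x_i-x_j)\,ds\Big)\to 0.
\]
This establishes alignment of at least one pair and hence \thm{t:main}; for $N=2$ that pair is the whole system and the Generic Alignment Conjecture follows at once.

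The principal obstacle is the Poincar\'e recurrence step in the absence of a genuinely invariant measure: one cannot invoke the classical statement directly, and must instead exploit the uniform Jacobian lower bound on $E_\e$ to sweep the forward images against the finite total volume of $M$ and then extract almost-everywhere pointwise recurrence by a measure-theoretic exhaustion. A secondary technical point is the multi-pair subcase of $I(z)=\infty$, where the clean two-body formula must be replaced by an alignment argument for a Cucker--Smale subsystem under persistent connectivity of the dynamically surviving interaction graph.
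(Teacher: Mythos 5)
Your first half is essentially sound and runs parallel to the paper: the Jacobian formula, the uniform quasi-invariance of Lebesgue measure on the finite-interaction set, a Poincar\'e-recurrence argument adapted to that quasi-invariance, and Kronecker's theorem to dispose of the never-interacting trajectories. Your implementation of the recurrence step is genuinely different in detail -- you extract pointwise recurrence $S_{t_k}z\to z$ (which needs the standard countable-basis refinement, but that is fine) and use monotonicity of $\cV_2$ to force the dissipation to vanish identically, then bootstrap to $\phi_{ij}\equiv 0$ via constancy of the velocities; the paper instead recurs to the set $G_\d$ where some pair is within distance $r_0-\d$ and accumulates a fixed quantum of interaction per return, contradicting the finite bound directly. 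Both routes eliminate the finite-interaction regime, and the outcome of \prop{p:volume} is the same.

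The genuine gap is in the case $\int_0^\infty\sum_{i\neq j}\phi_{ij}\,ds=\infty$ for $N\geq 3$. Choosing a pair with $\int_0^\infty\phi_{ij}\,ds=\infty$ is fine, but your claim that all pairs with finite accumulated interaction ``effectively decouple'' after some time $T$ is unjustified: a finite integral does not make $\phi_{kl}$ vanish after any finite time, and, more seriously, the chosen pair may itself have divergent accumulated interaction with third agents, so it is never isolated and there is no autonomous subsystem to which a ``persistent-connectivity alignment estimate'' could be applied -- in the strictly local setting such an estimate is essentially the content of the conjecture, not an available tool. The paper closes exactly this point with \lem{l:individ}: the $\cV_1$-law \eqref{e:V1} gives $\int_0^\infty\cI_1\,ds<\infty$, so in the scalar inequality $\ddt|v_i-v_j|\leq -\frac1N\phi_{ij}|v_i-v_j|+2\cI_1$ all cross terms generated by the other agents appear as an integrable forcing, and Gr\"onwall/Duhamel combined with $\int_0^\infty\phi_{ij}\,ds=\infty$ yields $|v_i-v_j|\to 0$ regardless of what the rest of the flock does. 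Without this (or some substitute controlling the cross terms), your argument establishes the theorem only for $N=2$, where the isolated-pair formula you wrote is exact; the first assertion of \thm{t:main}, about a pair aligning for arbitrary $N$, remains unproved in your proposal.
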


A similar methodology applies to the open environment $\O = \R^n$  in the presence of confinement or potential interaction forces. The results of \cite{ShuT2019,ShuT2019anti} show that the generic behavior here is oscillatory. In the local case considered here, it results in a possible positive-measure set of clusters which does not align.  A similar alternative, however, holds: generically, outside the natural set of decoupled oscillators \eqref{e:Hintro} the volume element shrinks to zero, see \prop{p:vol0pot}. As a result, in the confinement case any two-agent system will align and congregate generically, at least when the potential is quadratic, see \thm{t:conf2}.  For the model with pair-wise potential interactions  we obtain the same result but for a much more general class of pairs $(\phi,U)$ which includes the 3Zone model  with repulsion - alignment - attraction forces all present, see \thm{t:inter2} and the discussion afterwards. In particular, we find that alignment and convergence of separation $|x_1 -x_2| $ to the potential well occurs generically for every non-oscillatory data. This extends the near-equilibrium result proved in \cite{ShuT2019anti} to a global statement, see \cor{c:3Z}.

\section{Sticky particle model}\label{s:sticky}

We illustrate the validity of the conjecture on a simpler example of the sticky particle model which corresponds to the situation when  communication is extremely strong within its range:
\begin{equation}\label{}
\phi(x) = \left\{ \begin{split}
\infty, &\quad  |x| \leq r_0;\\
0, & \quad  |x| > r_0.
\end{split}\right.
\end{equation}
The sticky particle dynamics obeys the following rules:

-- if two agents $x_i$, $x_j$ approach distance $r_0$, their velocities switch instantaneously to the average $\frac12 (v_i + v_j)$. From that point on the agents are stuck together for rest of the time.

-- more generally, we say that clusters $\cC^1,\ldots, \cC^K$ each consisting of particles $x^k_i$, $i=1,\ldots,I_k$, $k=1,\ldots,K$ collide at time $t^*$ if for all $t<t^*$ one has $|x^k_i - x^l_j| > r_0$ for all $i,j$ and $k\neq l$, and at $t = t^*$ for each pair $k\neq l$ there exists $i,j$ such that $|x^k_i - x^l_j| = r_0$. If clusters have been traveling with velocities $v^1,\ldots, v^K$, respectively, a new cluster is formed at time $t^*$ traveling with velocity
\begin{equation}\label{e:averule}
v = \frac{1}{|I_1| + \dots + |I_K|} \sum_{k=1}^K |I_k| v^k.
\end{equation}

\begin{proposition}\label{p:sticky}
For almost every initial data $(\bx,\bv) \in \T^{nN} \times \R^{nN}$ the sticky particles congregate in a single cluster. Hence, the Generic Alignment Conjecture holds.
\end{proposition}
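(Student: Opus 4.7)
The plan is to reduce the proposition to iterated applications of Kronecker's theorem \thm{t:K} by induction on the number of clusters. The key algebraic observation is that, by the averaging rule \eqref{e:averule} iterated along any merger history, the velocity of a cluster made up of the particles indexed by a set $I\subset\{1,\dots,N\}$ is always the unweighted mean $w_I=\frac{1}{|I|}\sum_{j\in I}v_j$ of the corresponding initial velocities, independent of the order or timing of the mergers that produced it. Consequently every velocity that ever appears during the sticky evolution is a rational linear combination of the $nN$ scalar components of $\bv_0$.

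First I would fix the full-measure set $\cG\subset\R^{nN}$ of initial velocity vectors whose $nN$ components are $\Q$-linearly independent; the complement is a countable union of hyperplanes $\{\sum q_{j,\ell}v_j^\ell=0\}$ with $q_{j,\ell}\in\Q$, hence has Lebesgue measure zero. For any $\bv_0\in\cG$ and any partition $(I_1,\dots,I_K)$ of $\{1,\dots,N\}$, I claim the $nK$ components of the cluster velocity vector $(w_{I_1},\dots,w_{I_K})$ are themselves $\Q$-linearly independent. Indeed, any rational relation $\sum_{k,\ell}\lambda_{k,\ell}w_{I_k}^\ell=0$ unrolls to $\sum_{j,\ell}c_{j,\ell}v_j^\ell=0$ with $c_{j,\ell}=\lambda_{k,\ell}/|I_k|$ for $j\in I_k$, and $\bv_0\in\cG$ then forces every $\lambda_{k,\ell}$ to vanish.

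The induction on $K$ now runs as follows. Given a state with $K\geq 2$ clusters at $(\by,\bw)\in\T^{nK}\times\R^{nK}$ whose velocity $\bw$ satisfies the invariant of the previous paragraph, \thm{t:K} says the free-flow line $\by+t\bw$ fills $\T^{nK}$ densely. The set $U=\bigcup_{k\neq l}\{|y_k-y_l|<r_0\}$ is a nonempty open subset of $\T^{nK}$, so density (together with the elementary fact that for a rationally independent translation flow on a torus the forward and two-sided orbit closures coincide) yields some $t>0$ with $\by+t\bw\in U$; by continuity of pairwise distances, a pair first attains the trigger distance $r_0$ at a finite earlier time, and a sticky merger occurs whose resulting state has at most $K-1$ clusters with velocity vector still in the generic class. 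Iterating at most $N-1$ times terminates the process at $K=1$.

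The only potentially delicate point in this plan is the propagation of $\Q$-linear independence across mergers in the second paragraph, which is made essentially automatic by the rigid form of \eqref{e:averule}. Simultaneous multi-cluster collisions and initial configurations in which some pairs already lie within the interaction radius only speed up the reduction of $K$ and require no separate treatment; likewise no genericity is needed of the positions $\bx_0$, since a dense orbit in $\T^{nK}$ enters every nonempty open set from every base point.
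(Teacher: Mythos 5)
Your argument is correct and rests on the same two pillars as the paper's proof: genericity of rationally independent $\bv_0$ combined with the fact that the averaging rule \eqref{e:averule} makes every cluster velocity a rational combination of the initial velocities, and Kronecker's theorem (\thm{t:K}) forcing any persistent multi-cluster free flight to bring two clusters within range $r_0$. The only difference is organizational -- you run a forward induction on the cluster count, applying Kronecker on $\T^{nK}$ and propagating $\Q$-independence through mergers, whereas the paper argues once by contradiction at the terminal state via the non-density of the relative line of a single pair on $\T^n$ -- so this is not a substantive departure.
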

\begin{proof}
We can actually specify which data results in single clusters: those $\bv_0$'s that have rationally independent coordinates, i.e. there is no $\bq \in \Q^{nN}\backslash \{\bzero\}$ such that $\bq \cdot \bv_0 = 0$.  Note that the rationally dependent data is a countable union of hyperplanes in $\R^{nN}$, so it is Lebesgue-negligible.  

Now, let $\bv_0$ be rationally independent. Suppose it results in more than one clusters, and we consider a time $T$ beyond which no further gluing occurs. Let us consider two distinct ones $\cC^1$, $\cC^2$ traveling with velocities $v^1$ and $v^2$, respectively. According to the averaging rule \eqref{e:averule} both $v^1$ and $v^2$ are rational convex combinations of the original set of velocities
\begin{equation}\label{e:v1v2}
v^1 = \sum_{i: x_i \in \cC^1} q_i v_i, \qquad v^2 = \sum_{j: x_j \in \cC^2} q_j v_j.
\end{equation}
Since $\cC^1$ and $\cC^2$ never come closer than the communication distance $r_0$, neither does any pair of particles in each cluster. Let us fix $x^1 \in \cC^1 $ and $x^2 \in \cC^2$. They travel at constant velocities $v^1$ and $v^2$ respectively, and $|x^1 - x^2| > r_0$ for all $t>T$.  The above implies that the Euclidean line 
\[
x^1(t) - x^2(t) = x^1(T) - x^2(T) + (t-T)(v^1 - v^2)\mod 2\pi
\]
is non-ergodic. By \thm{t:K} this implies that the coordinates of $v^1 - v^2$ are rationally dependent. In view of \eqref{e:v1v2}, $\bv_0$ is rationally dependent, a contradiction.

\begin{remark}
Considering a more general model  with arbitrary distribution of masses $\{m_i\}_{i=1}^N$, the averaging rule changes to
\begin{equation}\label{e:averulem}
v = \frac{1}{M_1 + \dots + M_K} \sum_{k=1}^K M_k v^k,
\end{equation}
where $M_i$ are the masses of the clusters. Proceeding as in the proof of \prop{p:sticky} we obtain a pair of momenta
\begin{equation}\label{e:v1v2m}
v^1 = \frac{1}{\sum_{i: x_i \in \cC^1} m_i} \sum_{i: x_i \in \cC^1} m_i v_i, \qquad v^2 =  \frac{1}{\sum_{j: x_j \in \cC^2} m_j} \sum_{j: x_j \in \cC^2} m_j v_j
\end{equation}
so that the coordinates of $v^1 - v^2$ are rationally dependent. Since there are only finitely many  possible momenta \eqref{e:v1v2m} we conclude that $\bv_0$ belongs to a negligible set of data, and hence  \prop{p:sticky} still holds for this more general model.

\end{remark}

\end{proof}

\section{Cucker-Smale ensamble}\label{s:CS}

Let us consider now the classical system \eqref{e:CS} with smooth kernel $\phi$. The alignment of a solution $(\bx,\bv)$ can be interpreted as convergence of $\bv$ to the diagonal subspace $D = \{ \bv: v_1 = \dots = v_N \}$. It is easy to show that the square-distance of a field $\bv$ to $D$ is given precisely by the 2-variation
\[
(\dist\{\bv, D\})^2 = \sum_{i,j=1}^N |v_i - v_j|^2: = \cV_2.
\]
The 2-variation is a natural entropy of the system which obeys the following law
\begin{equation}\label{e:V2}
\ddt \cV_2 = - 2 \sum_{i,j=1}^N \phi(x_i - x_j) |v_i - v_j|^2.
\end{equation}
Thus, the distance between $\bv$ and $D$ is non-increasing.  

Since $\T^{nN} \times D$ is  a measure-zero set it is expected that the volume element under the dynamics of \eqref{e:CS} will shrink to zero. This is not exactly the ultimate alignment statement but it is a necessary condition for the conjecture to hold.

Let us denote for short $\phi_{ij} =  \phi(x_i - x_j)$. The divergence of the ensemble field given by \eqref{e:CS} is given by  $- \frac{n}{N} \sum_{i\neq j} \phi_{ij}$. So, the Jacobian of the Cucker-Smale transformation $S_t(\bx_0,\bv_0) = (\bx(t),\bv(t))$ is given by
\begin{equation}\label{e:Jacob}
\det \n_{\bx,\bv} S_t = \exp\left\{ - \frac{n}{N} \int_0^t \sum_{i\neq j} \phi_{ij}(s) \ds \right\}.
\end{equation}
Showing that the Cucker-Smale ensemble shrinks volumes to $0$ comes down to proving the following proposition.
\begin{proposition}\label{p:volume}
We have 
\[
\int_0^\infty \sum_{i\neq j} \phi_{ij}(s,\bx_0,\bv_0) \ds = \infty, 
\]
for  a.e.  $(\bx_0,\bv_0) \in \T^{nN} \times \R^{nN}$.
\end{proposition}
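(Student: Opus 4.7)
The plan is to show the set
\[
A = \Bigl\{(\bx_0,\bv_0)\in \T^{nN}\times\R^{nN} : I(\infty)<\infty\Bigr\},\qquad I(t):=\int_0^t\sum_{i\neq j}\phi_{ij}(s)\,\ds,
\]
is Lebesgue-negligible. It suffices to show that each forward-invariant slab
\[
A_{K,E} = A\cap\{\cV_2(\bv_0)\leq E,\ |\bar v_0|\leq E,\ I(\infty)\leq K\}
\]
is null, since $A$ is their countable union. On $A_{K,E}$, conservation of $\bar v$ together with monotonicity of $\cV_2$ keeps the orbit inside a bounded region $\Omega_E\subset\T^{nN}\times\R^{nN}$ of finite Lebesgue measure, and \eqref{e:Jacob} gives the pinched Jacobian
\[
e^{-nK/N}\leq \det\nabla S_t\leq 1\quad\text{on }A_{K,E},\ \forall\, t\geq 0.
\]

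The crux is a Poincar\'e-type recurrence argument on the injective volume-contracting semiflow $S_t|_{\Omega_E}$. The classical disjoint-preimages proof of Poincar\'e's theorem extends to any injective flow with $|\det\nabla S_t|\leq 1$ (it needs only the one-sided inequality $\mu(S_{-t}(B))\geq\mu(B)$, which the Jacobian bound provides), and yields, for $\mu$-a.e.\ $(\bx_0,\bv_0)\in A_{K,E}$, a sequence $t_n\to\infty$ with $S_{t_n}(\bx_0,\bv_0)\to(\bx_0,\bv_0)$ inside $\Omega_E$. Suppose now that some pair satisfies $|x_i(0)-x_j(0)|<r_0$. Continuity gives $\phi_{ij}\circ S_{t_n}\geq c>0$ for all large $n$, and the uniform velocity bound on $A_{K,E}$ provides a uniform positive transit-time $\delta$ through the open interaction set $U_{ij}=\{|x_i-x_j|<r_0\}$ at each return, whence
\[
\int_0^\infty\phi_{ij}\,\ds\geq c\sum_n \delta=\infty,
\]
contradicting $I(\infty)\leq K$. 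So up to a null set, no pair lies in $U_{ij}$ at time $0$; by forward-invariance of $A_{K,E}$, applying the same reasoning at every rational time $t_0\geq 0$ (with $S_{t_0}$ transferring null sets via its bounded Jacobian) and using openness of $U = \bigcup_{i\neq j}U_{ij}$ together with continuity of orbits, we obtain that a.e.\ orbit in $A_{K,E}$ avoids $U$ for all $t\geq 0$.

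A forever-non-interacting orbit has $\dot\bv\equiv 0$, so reduces to the Euclidean line $\bx_0+t\bv_0$ on $\T^{nN}$. For this line to avoid the open set $U$ for all $t\geq 0$, Kronecker's \thm{t:K} forces the coordinates of $\bv_0$ to be rationally dependent, a countable union of hyperplanes in $\R^{nN}$, hence Lebesgue-null. Combining with the previous paragraph, $A_{K,E}$ is null, and summing over $K,E\in\N$ completes the proof.

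The main obstacle is the adaptation of Poincar\'e's theorem to the non-measure-preserving setting: the classical statement requires $S_t$ to preserve a finite measure, while here $S_t$ is only volume-contracting. The saving grace is that the disjoint-preimages proof needs only the one-sided bound $|\det\nabla S_t|\leq 1$, which gives the requisite inequality inside the finite-measure ambient set $\Omega_E$. A secondary subtlety is extracting the uniformly positive transit-time $\delta$ at each recurrent visit to $U_{ij}$; this requires phase-space (position and velocity) convergence $S_{t_n}(\bx_0,\bv_0)\to(\bx_0,\bv_0)$ together with the uniform velocity bound on $A_{K,E}$, both of which are in place.
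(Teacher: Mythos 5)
Your overall architecture mirrors the paper's: decompose the bad set into pieces on which the dissipation integral is uniformly bounded by $K$, confine the dynamics to a finite-measure forward-invariant region, run a Poincar\'e-type recurrence to produce infinitely many interaction intervals of uniform length (contradicting the bound $K$), and eliminate the never-interacting orbits via Kronecker. The genuine gap is in your justification of the recurrence step. It is not true that the disjoint-preimages proof of Poincar\'e recurrence ``extends to any injective flow with $|\det \nabla S_t|\leq 1$'': the uniform contraction $S_t(x)=e^{-t}x$ of a bounded ball is injective with Jacobian $\leq 1$, yet no point other than the origin is recurrent. The inequality $\mu(S_{-t}(B))\geq \mu(B)$ does hold, but the classical argument also requires the pairwise disjoint preimages to lie in a \emph{common finite-measure set}, and here they need not: $\Omega_E$ is only forward invariant ($\cV_2$ grows backward in time, so $S_{-t}$ pushes subsets of $A_{K,E}$ out of $\Omega_E$), and on all of $\T^{nN}\times\R^{nN}$ disjointness alone yields nothing. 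So the recurrence, as you justify it, does not close.

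What actually drives the recurrence is the lower Jacobian bound $e^{-nK/N}$, which you wrote down but then did not use: for the set $B\subset A_{K,E}$ of points that never return, the \emph{forward} images $S_{t}(B)$ at integer times are pairwise disjoint (injectivity plus the definition of $B$), remain in the forward-invariant finite-measure region $\Omega_E$, and each has measure at least $e^{-nK/N}\mu(B)$; hence $\mu(B)=0$. This is exactly the paper's mechanism, encoded in \eqref{e:commen}: recurrence is available precisely because you have restricted to the set where the dissipation integral --- i.e.\ the volume compression you are trying to contradict --- is bounded. With the recurrence step repaired this way, the remainder of your argument (strict positivity of $\phi_{ij}$ at return times, the uniform transit time from the velocity bound, propagation over rational times using the bounded Jacobian, and the Kronecker rational-dependence conclusion for the resulting free lines) goes through and coincides with Lemmas~\ref{l:aux1} and~\ref{l:aux2} of the paper.
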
 
\begin{proof}
With a slight abuse of notation for a set $F \ss \T^{nN} \times \R^{nN}$ we denote by $|F|$ its Lebesgue measure.

Suppose that the conclusion of the proposition is not true, then there exists a subset $F \ss \T^{nN} \times \R^{nN}$ with $|F| >0$ and $M>0$ such that 
\begin{equation}\label{e:M}
\int_0^\infty \sum_{i\neq j} \phi_{ij}(s,\bx_0,\bv_0) \ds \leq M,
\end{equation}
for all $(\bx_0,\bv_0) \in F$. In particular, this implies that for any $\tilde{F} \ss F$ we have
\begin{equation}\label{e:commen}
 |\tilde{F}| \geq |S_t(\tilde{F})| \geq c_M |\tilde{F}|.
\end{equation}
We can also assume without loss of generality that $F$ is bounded, thus there exists $R>0$ such that $F \ss \T^{nN} \times \{ \max_i |v_i| \leq R\} = \O_R$. Note that by the maximum principle, $S_t(F) \ss \O_R$ for all $t>0$ as well. 

We now show that all agents emanating from set $F$ are generically $r_0$-separated. This will be the content of the following two lemmas.

\begin{lemma}\label{l:aux1}
Let us fix a  $\d>0$ and consider the set 
\[
G_\d = \cup_{i\neq j} \{ (\bx,\bv): \ |x_i - x_j| < r_0 - \d \}.
\]
Then $|F \cap G_\d| = 0$.
\end{lemma}
\begin{proof}
Suppose, on the contrary that  $|F \cap G_\d| >0$. According to \eqref{e:commen} for any $\tilde{F}\ss F$ the action of the transformation $S_n(\tilde{F})$, $n\in \N$ scales the measure of $\tilde{F}$ only by a fixed factor. Thus, the Poincare Recurrence Theorem applies (the classical statement asks for a measure preserving transformation, but only uses the fact that if $S_n(A)$'s are disjoint, then $|A|=0$. This clearly holds in our case too, see \cite{nadkarni}). It says that for almost every point $(\bx,\bv)\in F \cap G_\d$ the action $S_n(\bx,\bv)$ returns to $F \cap G_\d$ infinitely many times. Let us fix any such point $(\bx_0,\bv_0)\in F \cap G_\d$.  So, $S_{n_k} (\bx_0,\bv_0) = (\bx(n_k),\bv(n_k)) \in F \cap G_\d$ for some subsequence $n_k$. In particular $(\bx(n_k),\bv(n_k)) \in G_\d$. This means that for every $k$ there exists $i_k\neq j_k$ such that $|x_{i_k}(n_k) - x_{j_k}(n_k)| < r_0 - \d$. Since all the velocities are bounded by $R$, there exists a time interval $I$ independent of $k$ such that $|x_{i_k}( n_k+s) - x_{j_k}( n_k+s)| < r_0 - \d/2$ for all $s\in I$. Since $\phi$ is properly supported on $B_{r_0}$, see \eqref{e:local}, this further implies that  $\phi_{i_kj_k}( n_k+s) >c_0$ for some $c_0>0$ and all $s\in I$. Consequently, $ \sum_{i\neq j} \phi_{ij}(n_k+s) >c_0$ for all $k$ and $s\in I$.  This clearly contradicts \eqref{e:M}.

\end{proof}

\begin{lemma}\label{l:aux2}
For almost any initial condition $(\bx_0,\bv_0)\in F$ we have $|x_i(t) - x_j(t)| \geq r_0$ for all $i\neq j$ and all $t\geq 0$. 
\end{lemma}
\begin{proof}
It follows from \lem{l:aux1} that
\[
| F \backslash \{ (\bx,\bv): \ |x_i - x_j| \geq  r_0 , \forall i\neq j \}| = 0.
\]
By the semigroup property we have the inclusion $S_{t}(F)\ss F$ for any $t\geq 0$. Thus, by countable selection, there exists a subset $\tilde{F} \ss F$, $|\tilde{F}| = |F|$ such that 
\[
S_{t}(\tilde{F}) \ss \{ (\bx,\bv): \ |x_i - x_j| \geq  r_0 , \forall i\neq j \},\quad \forall t\in \Q_+.
\]
By continuity,
\[
S_{t}(\tilde{F}) \ss \{ (\bx,\bv): \ |x_i - x_j| \geq  r_0 , \forall i\neq j \},\quad \forall t\geq 0.
\]
This proves the lemma.
\end{proof}

According to \lem{l:aux2}, since all $\phi_{ij} = 0$ for almost any initial condition in $F$, the trajectories represent  straight Euclidean lines periodized on the torus 
\[
\bx(t) = \bx_0 + t \bv_0 \mod 2\pi
\]
 with all the agents being separated by $r_0$.  This means that the trajectory is not dense on $\T^{nN}$, and hence, by the Kronecker criterion of \thm{t:K}, $\bv_0$ must have $\Q$-dependent coordinates. Such data cannot fill a set of positive measure. Hence, $|F| = 0$.
\end{proof}

To harvest the consequences of just proved proposition let us prove the following lemma.
\begin{lemma}\label{l:individ}
If for some $i\neq j$, we have 
\[
\int_0^\infty \phi_{ij}(s,\bx_0,\bv_0) \ds = \infty, 
\]
then the agents align, $|v_i - v_j| \to 0$.
\end{lemma}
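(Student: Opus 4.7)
Let $u(t):=|v_i(t)-v_j(t)|^2$ and $a(t):=\tfrac{4}{N}\phi_{ij}(t)$. Direct differentiation along \eqref{e:CS} yields the ODE
\[
\dot u = -a\, u + R, \qquad R := \tfrac{2}{N}(v_i-v_j)\cdot\sum_{k\neq i,j}\bigl[\phi_{ik}(v_k-v_i) - \phi_{jk}(v_k-v_j)\bigr],
\]
which separates the self-dissipation of the pair from the perturbation $R$ induced by the remaining agents. Integrating the entropy law \eqref{e:V2} gives $\int_0^\infty \phi_{ij}u\,ds \leq \mathcal V_2(0)/2<\infty$. Using Cauchy--Schwarz on the finite sum in $R$ together with the $L^\infty$ bounds on $\phi$ and on the velocities (by the maximum principle), one obtains
\[
|R(t)|^2 \leq C\sum_{k\neq i,j}\bigl(\phi_{ik}|v_k-v_i|^2 + \phi_{jk}|v_k-v_j|^2\bigr),
\]
whose right-hand side is integrable by \eqref{e:V2}; hence $R\in L^2(0,\infty)$. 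The function $u$ is bounded and Lipschitz.

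Because $\int_0^\infty a\,ds=\infty$ while $\int_0^\infty au\,ds<\infty$, we immediately get $\liminf_{t\to\infty}u(t)=0$, for otherwise $u\geq c>0$ eventually would yield $\int au\geq c\int a=\infty$. Suppose for contradiction that $\limsup u=L>0$. Using the Lipschitz property of $u$, one constructs disjoint intervals $[t_n,s_n]$ with $u(t_n)\geq L-1/n$, $u(s_n)=L/2$, and $u\geq L/2$ throughout each interval. Integrating the ODE over $[t_n,s_n]$ gives the identity
\[
\int_{t_n}^{s_n}a(s)u(s)\,ds = u(t_n)-u(s_n)+\int_{t_n}^{s_n}R(s)\,ds = \tfrac{L}{2}-\tfrac{1}{n}+\int_{t_n}^{s_n}R(s)\,ds.
\]

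By Cauchy--Schwarz, $\bigl|\int_{t_n}^{s_n}R\,ds\bigr|\leq\sqrt{s_n-t_n}\,\sqrt{\int_{t_n}^\infty R^2\,ds}$, and the tail factor vanishes as $n\to\infty$ by integrability of $R^2$. When the lengths $s_n-t_n$ remain bounded, this forces $\int_{t_n}^{s_n}au\to L/2>0$, so that summation over the disjoint intervals produces $\sum_n\int_{t_n}^{s_n}au=\infty$, contradicting $\int_0^\infty au\,ds<\infty$. The main obstacle is the complementary regime of unbounded lengths; it can be handled either by observing that on an interval of unbounded length with $u\geq L/2$ the relative position $x_i-x_j$ travels at speed bounded below on $\T^n$ and---being confined to avoid $B_{r_0}$ whenever $a=0$---runs into the Kronecker density obstruction of \thm{t:K}, or alternatively by a LaSalle-invariance argument on the compact phase space $\T^{nN}\times\{|v|\leq R_{\max}\}$ showing that $v_i=v_j$ on every component of the $\omega$-limit set reachable from $\{\phi_{ij}>0\}$. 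In either case the assumption $L>0$ fails and $|v_i-v_j|\to 0$.
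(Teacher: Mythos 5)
Your reduction to the scalar ODE $\dot u=-au+R$, the bound $R\in L^2(0,\infty)$ via the $\cV_2$ law \eqref{e:V2}, the conclusion $\liminf u=0$, and the bounded-length interval argument are all sound. But the case you yourself flag as ``the main obstacle'' --- intervals $[t_n,s_n]$ of unbounded length on which $u\geq L/2$ --- is a genuine gap, and neither of your two proposed remedies closes it. The Kronecker route fails because \thm{t:K} concerns straight Euclidean lines (constant velocities) filling the torus over \emph{infinite} time, whereas on your finite (if long) intervals the relative trajectory $x_i-x_j$ is not a line: the velocities keep evolving through interactions with the remaining agents, and nothing confines the pair outside $B_{r_0}$ there anyway (only $\int_{t_n}^{s_n}\phi_{ij}u\,\ds\to 0$ is known). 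Moreover Kronecker-type arguments only yield statements for almost every datum, while the lemma is a deterministic claim for \emph{every} datum with $\int_0^\infty\phi_{ij}\,\ds=\infty$; it is the proposition feeding into it, not the lemma, that is allowed to be generic. The LaSalle alternative also does not do the job: vanishing of the dissipation on the $\omega$-limit set only forces $v_k=v_l$ for pairs within communication range there, and does not force $v_i=v_j$ for your specific pair; the phrase ``reachable from $\{\phi_{ij}>0\}$'' is not a usable restriction, and the whole difficulty --- that $|v_i-v_j|$ may oscillate because of the coupling $R$ --- is exactly what LaSalle does not control. The underlying reason the gap cannot be patched with your estimates is that an $L^2$-in-time bound on $R$ is simply too weak: over intervals of length $T_n\to\infty$ the Cauchy--Schwarz bound $\sqrt{T_n}\,\|R\|_{L^2(t_n,\infty)}$ need not vanish, and the $\cV_2$ law alone cannot upgrade $R$ to $L^1$ (that would require integrability of $\phi_{ik}$, which is not available).

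The paper closes precisely this gap by a different mechanism: the 1-variation law \eqref{e:V1} gives $\int_0^\infty\cI_1(s)\,\ds<\infty$ with $\cI_1=\frac1N\sum_{k,l}\phi_{kl}|v_k-v_l|$, and the coupling to third agents is bounded \emph{pointwise} by $2\cI_1$, i.e.\ in $L^1$ of time rather than $L^2$. One then writes $\ddt|v_i-v_j|\leq-\frac1N\phi_{ij}|v_i-v_j|+2\cI_1$ and applies \GL: the damping factor $\exp\{-\frac1N\int\phi_{ij}\}$ tends to zero by hypothesis, and the Duhamel term tends to zero because $\cI_1\in L^1$, uniformly in how the mass of $\phi_{ij}$ is distributed in time --- so no case analysis on interval lengths is needed. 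The missing idea in your argument is exactly this $L^1$ control furnished by $\cV_1$; with only $\cV_2$ in hand your scheme cannot be completed as written.
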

\begin{proof}\label{ }
The lemma follows in two steps. First, we have the following law for the 1-variation (see \cite{Sbook})
\begin{equation}\label{e:V1}
\cV_1 = \sum_{i,j} |v_i - v_j|, \quad \ddt \cV_1 \leq - \frac{1}{N} \sum_{i,j} \phi_{ij}  |v_i - v_j| : = - \cI_1.
\end{equation}
In particular,
\begin{equation}\label{e:I1}
\int_0^\infty  \cI_1(s) \ds <\infty.
\end{equation}

Next, let compute evolution of an individual difference
\[
\ddt |v_i - v_j|^2  = \frac1N \sum_k \phi_{ik} (v_k - v_i )\cdot (v_i - v_j)  - \frac1N \sum_k \phi_{jk} (v_k - v_j )\cdot (v_i - v_j) 
\]
In the first sum we single out the $k=j$ term and in the second $k=i$ term. The two result in
\begin{multline*}
\ddt |v_i - v_j|^2  = -  \frac1N \phi_{ij} |v_i - v_j|^2 +  \frac1N \sum_{k\neq j} \phi_{ik} (v_k - v_i )\cdot (v_i - v_j) \\ - \frac1N \sum_{k\neq i} \phi_{jk} (v_k - v_j )\cdot (v_i - v_j) ,
\end{multline*}
and hence
\[
\ddt |v_i - v_j|  \leq  -  \frac1N \phi_{ij} |v_i - v_j| + 2 \cI_1.
\]
Applying  \GL, and in view of the assumption and \eqref{e:I1},  we arrive at the conclusion.
\end{proof}

In view of \prop{p:volume}, for almost every data there exists $i\neq j$ which fulfills the \lem{l:individ}. Hence, at least two agents must align, and \thm{t:main} is proved.

\subsection{Clustering Conjecture}

For any initial data $(\bx_0,\bv_0) \in \T^{nN} \times \R^{nN}$ the $\o$-limit set of the trajectory, $\o(\bx_0,\bv_0)$, consists of solutions with constant entropy $\cV_2$.  According to \eqref{e:V2} the dissipation must vanish. This causes clustering: if $|x_i - x_j| <r_0$, and hence $\phi_{ij} >0$, then $v_i = v_j$.  So, agents with different velocities must be separated at least by the communication distance $r_0$. Moreover, all the velocities are constant and all agents travel along straight lines $x_i = x_i(0) + t v_i \mod 2\pi$. If we sort all agents into clusters $\cC^1,\ldots, \cC^K$ which collect agents with the same velocities, then for any $x_k \in \cC^k$, $x_l \in \cC^l$, $k
\neq l$, we have $v_k \neq v_l$ and $|x_k - x_l| \geq r_0$. Denoting $v_1,\ldots, v_K$ all the different velocities in this cluster system we conclude by the Kronecker criterion that 
\begin{equation}\label{e:rv}
r_{kl} \cdot (v_k - v_l) = 0,
\end{equation}
for some $r_{kl} \in \Q^n$.  By Galilean invariance we can also mod out the momentum by imposing another equation
\begin{equation}\label{e:mom0}
\sum_{k} |\cC^k| v_k = 0.
\end{equation}
The system \eqref{e:rv} - \eqref{e:mom0} has $\frac12 K(K-1) + n$ equations imposed upon $nK$ unknowns.
 Generically such a system will be overdetermined if $K > 2n$.  So, we conclude that most likely the number of clusters in the limiting state of the system does not exceed $2n$. This is a weaker form of the Generic Alignment Conjecture.

\begin{conjecture}[Clustering Conjecture]
For almost every initial datum  
\[(\bx_0,\bv_0) \in \T^{nN} \times \R^{nN}\]
 the solution to the system \eqref{e:CS} forms at most $2n$ clusters.
\end{conjecture}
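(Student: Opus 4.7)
The plan is to upgrade the algebraic dimension count given immediately before the conjecture into a measure-theoretic null-set statement on the ensemble space $\T^{nN}\times\R^{nN}$. The two key ingredients are already in hand: (i) the Cucker-Smale flow $S_t$ is a $C^1$-diffeomorphism with Jacobian \eqref{e:Jacob}; and (ii) within each combinatorial type $\tau$ of the $\o$-limit partition the vector of distinct cluster velocities $(v^1,\ldots,v^K)$ must lie in the subvariety $V_\tau\subset\R^{nK}$ cut out by the rational relations \eqref{e:rv} and the momentum constraint \eqref{e:mom0}, which when $K>2n$ is Lebesgue-negligible in $\R^{nK}$. It remains to transfer this null property back through the limit-velocity map.

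I would proceed in three steps. \emph{Step 1} (convergence): sharpen the $\o$-limit remark opening this subsection into a true convergence statement, namely that for a.e.\ initial datum the trajectory converges in $\bv$ to a piecewise-constant equilibrium profile, with clusters $\cC^1,\ldots,\cC^K$ of distinct velocities $v^1,\ldots,v^K$ and cross-cluster separations exceeding $r_0$. This should combine the dissipation identity \eqref{e:V2} with \lem{l:individ} applied inductively along the connected components of the communication graph, in the spirit of the LaSalle-type reasoning used in \prop{p:volume}. \emph{Step 2} (stratification): for each type $\tau$ let $E_\tau$ denote the set of initial data converging to an equilibrium of type $\tau$, and define the limit-velocity map $\Phi_\infty:E_\tau\to V_\tau$. \emph{Step 3} (null preimage): for each $\tau$ with $K>2n$ exhaust $E_\tau=\bigcup_{T\in\N}E_\tau^T$ where $E_\tau^T$ consists of data for which the cross-cluster separations exceed $r_0$ and the in-cluster velocity spread is $\leq 1/T$ for all $t\geq T$. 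On each stratum the approximate-velocity map $\Psi_T(\bx_0,\bv_0)=\bigl(\tfrac{1}{|\cC^k|}\sum_{i\in\cC^k}v_i(T)\bigr)_{k=1}^K$ is smooth (as the composition of $S_T$ with a linear projection), satisfies $|\Psi_T-\Phi_\infty|\leq 1/T$ on $E_\tau^T$, and therefore
\[
E_\tau^T\cap\Phi_\infty^{-1}(V_\tau)\ \subset\ \Psi_T^{-1}(V_\tau^{1/T}),
\]
with $V_\tau^\e$ the Euclidean $\e$-neighborhood. As $T\to\infty$, $|V_\tau^{1/T}|\to 0$ polynomially in $1/T$ with exponent equal to the excess codimension $\binom{K}{2}+n-nK>0$; any uniform control of $\Psi_T^{-1}$ on bounded subsets of $E_\tau^T$ would then close the argument, and a countable union over $\tau$ and $T$ would finish the proof.

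The main obstacle is precisely the last point in Step 3: the Jacobian of $S_T$ is $\leq 1$ by \eqref{e:Jacob} and typically decays to zero, so the inverse $S_T^{-1}$ can expand volumes unboundedly, and transferring the shrinkage of $V_\tau^{1/T}$ through $\Psi_T^{-1}$ is delicate. One would like to exploit an exponential in-cluster alignment rate, obtained by applying \thm{t:CSinto} to each cluster-subsystem, to beat this polynomial inflation; that however requires first proving the internal communication graph of each cluster is connected for a.e.\ datum in $E_\tau$, a nontrivial problem in its own right presumably calling for a second-order recurrence argument refining \lem{l:aux1}. A further combinatorial subtlety is that the type $\tau$ depends measurably but not continuously on the initial data, so the countable exhaustion over $\tau$ and $T$ must be carefully organized so that approximation errors do not accumulate. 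These are precisely the obstructions that currently keep the Clustering Conjecture open in its full generality.
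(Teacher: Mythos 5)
This statement is posed in the paper as a \emph{conjecture}: the text preceding it gives only a heuristic count of the equations \eqref{e:rv}--\eqref{e:mom0} versus the $nK$ unknowns, and no proof is claimed. Your proposal is therefore not being measured against a hidden argument in the paper, and you are right to present it as a program; but beyond the obstruction you yourself flag, two steps as written would actually fail. First, Step 1 is not a ``sharpening'' of the $\o$-limit discussion but an open problem in its own right: the constancy of $\cV_2$ on the $\o$-limit set does not give convergence of $\bv(t)$ to a single equilibrium profile, and applying \lem{l:individ} ``inductively along connected components of the communication graph'' is not available, since the graph is time-dependent and in-cluster connectivity (equivalently, divergence of the relevant $\int\phi_{ij}$) is precisely what is unknown; \prop{p:volume} only guarantees one interacting pair. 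Second, the quantitative heart of Step 3 is flawed: the set $V_\tau$ cut out by \eqref{e:rv} is a \emph{countable} union of rational hyperplane conditions, one for each choice of $r_{kl}\in\Q^n$, and such a union is dense in velocity space. Hence its Euclidean $1/T$-neighborhood $V_\tau^{1/T}$ has full measure, not measure $O(T^{-(\binom{K}{2}+n-nK)})$; to get any decay you must truncate the heights of the rational vectors and couple the admissible height to $T$ through a quantitative (Weyl-type) equidistribution estimate replacing the purely qualitative \thm{t:K} --- a substantial missing ingredient that your outline does not supply.

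The deeper structural issue, which you name but which deserves to be stated as the reason this route cannot be ``closed'' by uniform control of $\Psi_T^{-1}$: the map from initial data to limiting velocities is exactly the kind of map that sends positive-measure sets onto null sets --- the Generic Alignment Conjecture itself asserts that a.e.\ datum is sent into the diagonal $D$, a null set. So ``the limit configuration lies in a null variety, hence the data set is null'' can never follow from soft preimage reasoning; it must be fought against the volume contraction \eqref{e:Jacob}, whose rate is governed by the same in-cluster integrals $\int\phi_{ij}$ that diverge whenever a cluster has internal interactions. Beating the (at best polynomial, after height truncation) shrinkage of $V_\tau^{1/T}$ would require exponential in-cluster alignment, i.e.\ \thm{t:CSinto} applied to each cluster subsystem, which again presupposes the unproven connectivity of Step 1. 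In short, your stratification is a reasonable scaffold and correctly identifies where the difficulty sits, but Steps 1 and 3 each contain a genuine gap (convergence, and the density of the rational constraint set), so the proposal does not constitute a proof --- consistent with the statement remaining a conjecture in the paper.
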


\section{Potential forces}

Let us explore application of the method to the Cucker-Smale system on the open environment $\R^{nN} \times \R^{nN}$  with additional potential forces.  We consider two classes of such forces: confinement 
and mutual interactions. Let us start with the first case.

\subsection{Dynamics under confinement force}
The system is given by
\begin{equation}\label{e:CSconf}
\left\{
\begin{split}
\dot{x}_i&=v_i,\\
\dot{v}_i& = \frac1N \sum_{j=1}^N \phi(x_i-x_j)(v_j-v_i) - \n U(x_i), 
\end{split}\right.
\end{equation}
where the potential $U$ is assumed to be radial and confining
\[
U(r) \to \infty, \quad \text{as }\ r \to \infty.
\]
The total energy which given by 
\begin{equation}\label{e:EKP}
\begin{split}
\cE  &= \cK + \cP, \\
\cK = \frac{1}{2N} \sum_{i=1}^N |\bv_i|^2,& \quad   \cP  = \frac{1}{N} \sum_{i=1}^N U(x_i)
\end{split}
\end{equation}
satisfies the same dissipative law
\begin{equation}\label{e:enlaw}
\ddt \cE = - \frac{1}{N^2} \sum_{i,j=1}^N \phi(x_i - x_j) |v_i - v_j|^2.
\end{equation}
Hence, it remains bounded. This implies that in the confinement case the flock remains bounded in both space and velocity directions (although the diameter may heavily depend on $N$). To put it qualitatively, let us fix an energy level $E>0$ and consider the sub-level set
\[
\O(E) = \{ (\bx,\bv)\in \R^{nN} \times \R^{nN}: \cE \leq E\}.
\]
We can see that $\O(E)$ is a bounded subset of the ensemble space and remains invariant under the action of the semigroup $S_t$. Note that potential forces have no impact on the Jacobian of the transformation $S_t$, and the formula \eqref{e:Jacob} remains the same. Consequently, the arguments of \lem{l:aux1} and \ref{l:aux2} apply on every sub-level set $\O(E)$ to show that for almost any initial data $(\bx_0,\bv_0) \in \R^{nN} \times \R^{nN}$ the following dichotomy holds: either $|x_i(t) - x_j(t)| \geq r_0$ for all $i\neq j$ and all $t\geq 0$, or $\int_0^\infty \sum_{i,j} \phi_{ij} \ds = \infty$.  The first alternative may hold for a set of positive measure. Let us denote it
\begin{equation}\label{ }
\cH = \{ (\bx_0,\bv_0)\in \R^{nN} \times \R^{nN} :  |x_i(t) - x_j(t)| \geq r_0 \ \forall i\neq j,\ \forall t\geq 0\}
\end{equation}
Solutions in $\cH$ will not interact through alignment, $\phi_{ij} = 0$, so the dynamics is described by $N$ decoupled Hamiltonian systems:

\begin{equation}\label{e:H}
\left\{
\begin{split}
\dot{x}_i&=v_i,\\
\dot{v}_i& = - \n U(x_i), 
\end{split}\right.
\end{equation}
So, we can state the dichotomy above as follows.
\begin{proposition}\label{p:vol0pot} For almost every  $(\bx_0,\bv_0) \in \R^{nN} \times \R^{nN} \backslash \cH$, we have 
\[
\int_0^\infty \sum_{i\neq j} \phi_{ij}(s,\bx_0,\bv_0) \ds = \infty.
\]
In particular, it holds for almost every initial data for which $|x_i^0 - x_j^0|<r_0$ for at least one pair of $i\neq j$.
\end{proposition}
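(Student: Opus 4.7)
The plan is to mirror the proof of \prop{p:volume} on each bounded, forward-invariant energy sub-level set $\O(E)$ and then exhaust the phase space by sending $E \to \infty$. Since $\R^{nN}\times\R^{nN} = \bigcup_{E\in\N} \O(E)$, each $\O(E)$ has finite Lebesgue measure, and each is $S_t$-invariant by the dissipation law \eqref{e:enlaw}, it suffices to establish the conclusion for almost every $(\bx_0,\bv_0)\in \O(E)\backslash \cH$ for every fixed $E>0$ and then take a countable union of null sets.

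Fix such an $E$ and argue by contradiction: suppose there exist a subset $F \ss \O(E)\backslash \cH$ with $|F|>0$ and a constant $M>0$ such that \eqref{e:M} holds throughout $F$. The crucial observation is that the confinement term $-\n U(x_i)$ depends only on $x_i$ and not on $v_i$, so it contributes nothing to the divergence of the ensemble vector field. Consequently the Jacobian formula \eqref{e:Jacob} and the quasi-invariance comparison \eqref{e:commen} carry over verbatim, yielding $|\tilde F| \geq |S_t(\tilde F)| \geq c_M |\tilde F|$ for every measurable $\tilde F \ss F$. Since $\O(E)$ has finite measure and is invariant, the Poincar\'e Recurrence Theorem may be invoked for the discrete action $\{S_n\}_{n\in\N}$ relative to $F$. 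Repeating the argument of \lem{l:aux1}, if $|F \cap G_\d|>0$ for some $\d>0$, recurrence produces a sequence $n_k\to\infty$ and pairs $i_k\neq j_k$ with $|x_{i_k}(n_k)-x_{j_k}(n_k)|<r_0-\d$; the uniform velocity bound $|v_i|^2 \leq 2NE$ available on $\O(E)$ propagates this inequality to $\phi_{i_kj_k}(n_k+s)\geq c_0$ on a $k$-independent interval $s\in I$, contradicting \eqref{e:M}. Hence $|F \cap G_\d|=0$ for every $\d>0$, and the semigroup-continuity argument of \lem{l:aux2} upgrades this to $|x_i(t)-x_j(t)|\geq r_0$ for all $t\geq 0$ and all $i\neq j$ almost everywhere on $F$. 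This places $F$ inside $\cH$ up to a null set, contradicting $F \ss \O(E)\backslash \cH$ with $|F|>0$.

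The main technical subtlety is that the dynamics is \emph{not} measure-preserving, only quasi-invariant on $F$; the uniform Jacobian lower bound $c_M$ guaranteed by the assumption \eqref{e:M} is precisely what allows Poincar\'e Recurrence to be applied in its usual form. The exhaustion $\O(E)\nearrow \R^{nN}\times\R^{nN}$ is likewise essential, since the ambient phase space has infinite measure and is noncompact, but energy confinement restores both finiteness and a uniform velocity bound on each level. Finally, the ``in particular'' clause is immediate from the definition of $\cH$: if $|x_i^0-x_j^0|<r_0$ for at least one pair, then $(\bx_0,\bv_0)$ violates the defining condition of $\cH$ already at $t=0$, so it lies in $\R^{nN}\times\R^{nN}\backslash\cH$ and the first part of the proposition applies.
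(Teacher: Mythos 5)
Your proposal is correct and follows essentially the same route as the paper: the paper likewise observes that the potential force does not alter the Jacobian formula \eqref{e:Jacob}, restricts to the bounded invariant sub-level sets $\O(E)$, and reruns the arguments of \lem{l:aux1} and \lem{l:aux2} there, with the set of never-interacting data absorbed into $\cH$ rather than eliminated by Kronecker. Your write-up merely makes explicit the exhaustion over $E$, the velocity bound on $\O(E)$, and the trivial ``in particular'' clause, all consistent with the paper's argument.
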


In terms of the action of $S_t$, \prop{p:vol0pot} can be interpreted more geometrically:  the semigroup $S_t$ acts invariantly and measure preserving on $\cH$, while it contracts volumes to $0$ as $t\to \infty$ on the complement $\R^{nN} \times \R^{nN} \backslash \cH$.
Despite this contractivity, the generic long time behavior is still more complicated than just convergence to the same velocity. We illustrate this by a simple example.

\begin{example}
Suppose we have a three-agent system $x_1, x_2,x_3$, evolving under the quadratic potential $U = \frac12 r^2$. Let $(x_3,v_3)$ belong to a set of high energy data $F$ which oscillate according to 
\[
\dot{x}_3 = v_3, \qquad \dot{v}_3 = -x_3,
\] 
and such that $|x_3(t)| >R\gg 1$ for all $t>0$ and all $(x_3^0,v_3^0)\in F$. Such a set has positive measure $|F|>0$ in $\R^n \times \R^n$, if $n >1$. Let $\phi$ has a communication range $R/2$ and $\phi(r) = \L$ for $r<R/4$, and $\L \gg 1$. Under such strong communication, any small initial data $(x_1^0,x_2^0; v_1^0,v_2^0) \in \R^2 \times \R^2$ with $|(x_1^0,x_2^0; v_1^0,v_2^0)| < \e \ll R/4$ will never leave a $\d$-neighborhood of the origin as it will be confined to the low energy set $\O(\e)$. Hence, the pair $(x_1,x_2)$ will never interact with $x_3$. However, the pair of interactive agents $x_1,x_2$ will undergo constant alignment interaction as $\phi_{12} = 1$. According to \cite{ShuT2019}, $|x_1 - x_2| + |v_1 - v_2| \lesssim e^{-ct}$ for some $c>0$.  Thus, the configuration data $(\bx,\bv)$ starting on the set $F \times B_\e$ of positive measure will never achieve complete alignment.
\end{example}

Building upon this example one can construct systems of any number of agents $N$ which generically achieve aggregation into an arbitrary number of clusters $K \leq  N$, with $K=N$ corresponding to the data in the oscillation set $\cH$.  Since in all these examples there is always a pair of agents that does align, one is naturally led to believe that a similar statement to that of \thm{t:main} can be proved in the confinement case as well.
The one crucial link that is missing, however, is the analogue of \lem{l:individ}. In other words, even knowing that for some pair of agents $\int_0^\infty \phi_{ij}(s) \ds = \infty$ we cannot conclude the alignment due to the lack of  the $\cV_1$-law \eqref{e:V1}. 

Nonetheless, this issue can be circumvented  for a two agent system with quadratic confinement.

\begin{theorem}\label{t:conf2}
Suppose $N = 2$. Suppose that $\phi(r) \geq 0$ is a smooth radially decreasing kernel, and $U(r) = \frac12 r^2$. Then for almost every initial datum  $(\bx_0,\bv_0) \in \R^{2n} \times \R^{2n} \backslash \cH$ the solution to the system \eqref{e:CSconf}  aligns and aggregates
\[
 |x_1 - x_2| + |v_1 - v_2| \to 0.
\]
\end{theorem}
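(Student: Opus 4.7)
My plan is to pass to the relative variables $x=x_1-x_2$, $v=v_1-v_2$, apply \prop{p:vol0pot} to obtain the divergence of $\int\phi$ along the relative trajectory, and then identify the $\omega$-limit set using energy dissipation together with a conservation law for angular momentum that is specific to the quadratic confinement.

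Since $\phi$ is radial and $U(r)=\tfrac12 r^2$, the center of mass $\bar x=\tfrac12(x_1+x_2)$ satisfies the uncoupled harmonic oscillator $\ddot{\bar x}+\bar x=0$ and is irrelevant for alignment, while the relative variables obey the autonomous ODE
\[
\dot x=v,\qquad \dot v=-\phi(x)v-x.
\]
In these variables $\cH$ becomes the relative condition $|x(t)|\ge r_0$ for all $t\ge 0$, and the target $|x_1-x_2|+|v_1-v_2|\to 0$ is just $(x,v)\to(0,0)$. Using \prop{p:vol0pot} and Fubini on the center-of-mass factor, for a.e.\ relative datum outside $\cH$ one has $\int_0^\infty\phi(x(s))\,ds=\infty$; I fix such a datum and work with it below.

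The energy $E=\tfrac12(|x|^2+|v|^2)$ obeys $\dot E=-\phi(x)|v|^2\le 0$, so the orbit is bounded and $E\to E_\infty$. LaSalle's invariance principle then confines the $\omega$-limit set $\omega_0$ to the largest invariant subset of $\{\phi(x)|v|^2=0\}$. A brief case analysis (at an orbit point with $|x|<r_0$ invariance forces $v=0$, but then $\dot v=-x\ne 0$ immediately makes $v$ nonzero while $\phi(x)$ is still positive) shows that any orbit in $\omega_0$ is either the fixed point at the origin or lies entirely in $\{|x|\ge r_0\}$; on the latter $\phi\equiv 0$ along it and the orbit solves the free oscillator $\ddot x+x=0$. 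The finishing ingredient is the angular-momentum tensor $L=x\wedge v$, for which the relative equation gives $\dot L=-\phi(x)\,L$, and hence
\[
|L(t)|=|L(0)|\exp\Big(-\int_0^t\phi(x(s))\,ds\Big)\longrightarrow 0.
\]
By continuity $L\equiv 0$ on $\omega_0$; but a harmonic orbit with $L\equiv 0$ keeps $x$ and $v$ colinear, hence lives on a line through the origin and must hit $\{x=0\}$, which is incompatible with $|x|\ge r_0>0$ unless the orbit is trivial. Thus $\omega_0=\{(0,0)\}$ and the desired convergence follows.

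The essential obstacle is exactly the step I just handled: in dimension $n\ge 2$ the annulus $\{|x|\ge r_0\}$ is populated by nontrivial circular harmonic orbits which LaSalle alone cannot exclude as $\omega$-limit candidates, and the quadratic choice of $U$ is used crucially to provide the clean identity $\dot L=-\phi L$. Extending \thm{t:conf2} to a general confining potential should therefore be the principal difficulty: a substitute conserved quantity, or a direct dynamical argument, would be needed to rule out asymptotic annular oscillations off $\supp\phi$.
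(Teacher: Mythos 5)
Your argument is correct, but it takes a genuinely different route from the paper. The paper's proof is a hypocoercivity-type computation in the relative variables: it augments the energy $|v_{12}|^2+|x_{12}|^2$ by the communication-weighted cross term $\e\,\phi_{12}\,x_{12}\cdot v_{12}$, uses $\phi'\le 0$ and Young's inequality to get $\ddt\tilde{\cE}\le -c\,\phi_{12}\,\tilde{\cE}$, and then concludes by \GL{} together with \prop{p:vol0pot}, which supplies $\int_0^\infty\phi_{12}\,ds=\infty$. You instead run a soft dynamical-systems argument: LaSalle confines the $\omega$-limit set to orbits on which $\phi_{12}|v_{12}|^2\equiv 0$, and the identity $\dot L=-\phi_{12}L$ for $L=x_{12}\wedge v_{12}$ (valid here because $\n U(x_1)-\n U(x_2)=x_{12}$ for the quadratic potential) kills the angular momentum via the same divergence $\int\phi_{12}=\infty$, which rules out the annular harmonic orbits in $\{|x_{12}|\ge r_0\}$ that LaSalle alone cannot exclude; the case analysis at points with $|x_{12}|<r_0$ and the colinearity argument are both sound, and your Fubini reduction to relative data is harmless (indeed unnecessary, since the hypothesis and conclusion depend only on the relative variables, so \prop{p:vol0pot} applies verbatim with $\sum_{i\neq j}\phi_{ij}=2\phi_{12}$). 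What each approach buys: the paper's functional inequality is quantitative (decay controlled by $\exp(-c\int_0^t\phi_{12})$, later sharpened to $1/\sqrt{t}$ rates) and is exactly the template that generalizes to \thm{t:inter2} and the 3Zone corollaries under \eqref{e:phiU}--\eqref{e:U'U}, at the price of requiring $\phi$ radially decreasing to discard the $\phi'$ term; your argument gives convergence with no rate, avoids any monotonicity of $\phi$ beyond positivity on $B_{r_0}$, but leans on the quadratic structure twice (decoupling of the relative dynamics from the center of mass, and linearity of the off-support dynamics), so it does not transfer to interaction potentials whose well sits away from the origin, where the correct limit is the well rather than $x_{12}=0$. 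One small correction to your closing remark: in the interaction model \eqref{e:CS2Z} the relative force $\n U(x_{12})$ is parallel to $x_{12}$ for any radial $U$, so the identity $\dot L=-\phi_{12}L$ is not special to the quadratic case there; quadraticity is only essential for the confinement model treated in \thm{t:conf2}.
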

\begin{proof}
Let us denote for short $v_{12} = v_1 - v_2$ and $x_{12} = x_1 - x_2$. We have
\[
\dot{v}_{12} = - \phi_{12} v_{12} - x_{12}, \quad \dot{x}_{12} = v_{12},
\]
and the energy law reads
\[
\ddt( |v_{12}|^2+ |x_{12}|^2) = - 2 \phi_{12} |v_{12}|^2.
\]
We supplement the energy with the communication-weighted cross-product term with a small factor $\e>0$ to be determined later. So, let us define
\[
\chi = \phi_{12} x_{12} \cdot v_{12}.
\]
Then
\[
\dot{\chi} =  2 \phi'_{12} \frac{(x_{12} \cdot v_{12})^2}{|x_{12}|} + \phi_{12} |v_{12}|^2 -  \phi_{12} |x_{12}|^2 - \phi^2_{12} x_{12} \cdot v_{12}.
\]
The first term is negative, so we will drop it. In the last we use that $\phi$ is bounded and apply the generalized Young inequality,
\[
\dot{\chi} \leq c_1 \phi_{12} |v_{12}|^2 - \frac12 \phi_{12} |x_{12}|^2.
\]

Let us consider the modified energy $\tilde{\cE} = |v_{12}|^2+ |x_{12}|^2 + \e \chi$ and note that for $\e>0$ small $\tilde{\cE} \sim \cE$. Combining the above computations, we obtain
\[
\ddt \tilde{\cE} \leq  - \phi_{12} |v_{12}|^2 - \frac{\e}{2} \phi_{12} |x_{12}|^2 \leq -c_2\phi_{12}  \tilde{\cE}  .
\]
The result follows from \prop{p:vol0pot} and \GL.

\end{proof}

\subsection{Dynamics under potential interactions}

Let us now consider the potential interaction force
\begin{equation}\label{e:CS2Z}
\left\{
\begin{split}
\dot{x}_i&=v_i,\\
\dot{v}_i& = \frac1N \sum_{j=1}^N \phi(x_i-x_j)(v_j-v_i) - \frac1N \sum_{j=1}^N \n U(x_i - x_j),
\end{split}\right.
\end{equation}
where as before the potential $U$ is assumed to be radial and attracting at long range
\[
U(r) \to \infty,\quad \text{as }\ r \to \infty.
\]
The total energy which is given by 

\begin{equation}\label{e:EKPp}
\begin{split}
\cE  &= \cK + \cP, \\
\cK = \frac{1}{2N} \sum_{i=1}^N |v_i|^2,& \quad   \cP  = \frac{1}{N^2} \sum_{i,j=1}^N U(x_{ij})
\end{split}
\end{equation}
The total energy satisfies the same law \eqref{e:enlaw}.

Let us note that the dynamics under \eqref{e:CS2Z} is generically unbounded in the $x$-direction thanks to the conservation of momentum and linear transport of the center of mass: 
\begin{equation}\label{e:meanlaws}
\begin{split}
\ddt \bar{v} & = 0,\quad \bar{v} =  \frac{1}{N} \sum_{i=1}^N v_i, \\
\ddt \bar{x} & = \bar{v}, \quad  \bar{x} =  \frac{1}{N} \sum_{i=1}^N x_i.
\end{split}
\end{equation}
So, to make it bounded it is necessary to mode out the momentum by the Galilean invariance thereby restricting the system to the null-space 
\[
\R^{nN}_ 0 \times \R^{nN}_ 0 = \{ (\bx,\bv):  \bar{x} = \bar{v} = 0\}.
\]
The semigroup $S_t$ restricted to this space will leave the subenergy sets $\O(E)$ invariant, and will be confined to a bounded region in $(x,v)$-space depending only on $E$.

The next step is to compute the divergence of the field given by \eqref{e:CS2Z} restricted to $\R^{nN}_ 0 \times \R^{nN}_ 0 $. The following lemma shows that the divergence is the same as in the unrestricted case.
\begin{lemma}\label{ }
Denting by $\bF$ the field defined by \eqref{e:CS2Z} on $\R^{nN}_ 0 \times \R^{nN}_ 0 $ we have
\[
\diver_{\R^{nN}_ 0 \times \R^{nN}_ 0} \bF = - \frac{n}{N} \sum_{i\neq j} \phi_{ij}.
\]
\end{lemma}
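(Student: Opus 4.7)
The plan is to pass to the orthogonal decomposition
\[
\R^{nN}\times\R^{nN} \ =\ \big(\R^{nN}_0\times\R^{nN}_0\big)\ \oplus\ D,
\]
where $D=\{(\xi,\ldots,\xi,\eta,\ldots,\eta):\xi,\eta\in\R^n\}$ is the $2n$-dimensional diagonal subspace, which is precisely the orthogonal complement of $\R^{nN}_0\times\R^{nN}_0$. I would show that the derivative $D\bF(p)$ at any point $p\in\R^{nN}_0\times\R^{nN}_0$ is block-diagonal with respect to this splitting and that the block on $D$ has zero trace; the lemma then follows from the trace identity
\[
\diver_{\R^{nN}_0\times\R^{nN}_0}\bF(p)\ =\ \mathrm{tr}\,D\bF(p)\ -\ \mathrm{tr}\big(D\bF(p)|_D\big),
\]
together with the standard ambient computation $\mathrm{tr}\,D\bF(p)=-\frac{n}{N}\sum_{i\neq j}\phi_{ij}$ (the potential terms contribute nothing since they are $\bv$-independent, and $\dot x_i=v_i$ is $\bx$-independent).

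The first step is to verify that $\bF$ is tangent to $\R^{nN}_0\times\R^{nN}_0$, so that $D\bF(p)$ leaves this subspace invariant. That $\ddt\bar x=0$ follows from \eqref{e:meanlaws}; that $\ddt\bar v=0$ follows from a symmetric swap of indices in the alignment double-sum, using $\phi(-r)=\phi(r)$, together with the oddness of $\nabla U$ inherited from the radial symmetry of $U$.

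The second step exploits translation invariance. Because the right-hand side of \eqref{e:CS2Z} depends only on differences $x_i-x_j$ and $v_i-v_j$, adding a diagonal perturbation $(\xi,\ldots,\xi,\eta,\ldots,\eta)$ to $(\bx,\bv)$ leaves every $\dot v_i$ unchanged and shifts every $\dot x_i=v_i$ by $\eta$. Differentiating, one gets that $D\bF(p)$ maps $D$ into $D$ via the $2n\times 2n$ block
\[
(\xi,\eta)\ \longmapsto\ (\eta,0),
\]
which is nilpotent with trace zero. Combined with Step 1 this gives the desired block-diagonal structure, and the trace identity immediately yields the claim.

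I do not expect any serious obstacle; the one mildly delicate point is that $D\bF$ actually \emph{preserves} $D$, not merely has vanishing trace transversally, but this is a direct consequence of the translation invariance. As a sanity check one can redo the same computation in the explicit parametrization $x_N=-\sum_{i<N}x_i$, $v_N=-\sum_{i<N}v_i$: summing $\mathrm{tr}\,\partial_{v_i}\dot v_i$ over $i<N$ picks up the usual $-\tfrac{n}{N}\phi_{ij}$ for $j<N$ and a doubled contribution $-\tfrac{2n}{N}\phi_{iN}$ from the $j=N$ slot (since $v_i$ enters $\dot v_i$ both directly and through $v_N$), which after using $\phi_{iN}=\phi_{Ni}$ repackages back to $-\tfrac{n}{N}\sum_{i\neq j}\phi_{ij}$.
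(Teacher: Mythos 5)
Your argument is correct, but your primary route is genuinely different from the paper's — and, amusingly, the computation you relegate to a ``sanity check'' \emph{is} the paper's proof. The paper parametrizes $\R^{nN}_0\times\R^{nN}_0$ by the first $N-1$ agents, treats $x_N=-x_1-\dots-x_{N-1}$, $v_N=-v_1-\dots-v_{N-1}$ as slave variables, notes that only the $v$-divergence of the $v$-equations matters, and sums $\nabla_{v_i}\cdot\dot v_i$ over $i\le N-1$, picking up the doubled contributions $-\tfrac{2n}{N}\phi_{iN}$ from the $j=N$ slot and recombining them via $\phi_{iN}=\phi_{Ni}$ into $-\tfrac{n}{N}\sum_{i\neq j}\phi_{ij}$ — exactly your last paragraph. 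Your main argument instead works coordinate-free: split the ambient space orthogonally as $(\R^{nN}_0\times\R^{nN}_0)\oplus D$ with $D$ the diagonal, check that $\bF$ is tangent to the null space (momentum conservation plus oddness of $\nabla U$), observe that translation/Galilean invariance makes the increment of $\bF$ along a diagonal perturbation exactly $(\eta,\ldots,\eta;0,\ldots,0)$, so $D\bF(p)$ preserves $D$ and acts there as the nilpotent block $(\xi,\eta)\mapsto(\eta,0)$ of zero trace, and then subtract this from the easy ambient divergence $-\tfrac{n}{N}\sum_{i\neq j}\phi_{ij}$. Both proofs are sound; yours buys a structural explanation of \emph{why} restricting to the null space does not change the divergence (the transverse block is trace-free), and it transfers verbatim to any translation-invariant interaction, while the paper's slave-coordinate computation is shorter and needs no discussion of invariant subspaces or the trace identity. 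One small simplification of your ``mildly delicate point'': for the identity $\operatorname{tr} D\bF=\operatorname{tr}(D\bF|_{\R^{nN}_0\times\R^{nN}_0})+\operatorname{tr}(D\bF|_{D})$ you do not actually need $D\bF$ to preserve $D$; tangency of $\bF$ to the null space already gives a block-triangular form, and the transverse diagonal block is computed by projecting, which your translation-invariance calculation handles just as well.
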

\begin{proof}
Since the divergence is independent of coordinate system, the easiest way to see it is to consider the Cartesian system on the first $N-1$ copies of $\R^n$, and consider the last $\R^n$ as a ``slave" space given by
\[
x_N = -x_1 - \dots - x_{N-1}, \quad v_N = -v_1 - \dots - v_{N-1}.
\]
Since the $x$-component is independent of the $v$-component, we can see that we only have to compute the divergence in first $N-1$ coordinates of the $v$-equation. Let us write, for $i\leq N-1$,
\[
\dot{v}_i = \frac1N \sum_{j=1}^{N-1} \phi_{ij} (v_j-v_i) - \frac1N\phi_{iN} \left( \sum_{k=1}^{N-1} v_k - v_i \right)  - \frac1N \sum_{j=1}^N \n U(x_i - x_j),
\]
So, taking divergence with respect to $v_i$, we obtain
\[
-\frac{n}{N} \sum_{j=1,j\neq i}^{N-1} \phi_{ij} - \frac{2n}{N}\phi_{iN}.
\]
Summing up over $i=1,\ldots,N-1$ we obtain
\begin{equation*}\label{}
\begin{split}
-\frac{n}{N} \sum_{j\neq i}^{N-1} \phi_{ij} - \frac{2n}{N}\sum_{i=1}^{N-1}\phi_{iN} & = -\frac{n}{N} \sum_{j\neq i}^{N-1} \phi_{ij} - \frac{n}{N} \sum_{i=1}^{N-1}\phi_{iN} - \frac{n}{N} \sum_{i=1}^{N-1}\phi_{Ni} \\
& = - \frac{n}{N} \sum_{i\neq j} \phi_{ij}.
\end{split}
\end{equation*}
This proves the lemma.
\end{proof}

So, since $\O(E)$ is a bounded set when restricted on $\R^{nN}_ 0 \times \R^{nN}_ 0$, \lem{l:aux2} applies, and hence we obtain exact same statement as \prop{p:vol0pot} on the null space. By Galilean invariance it extends to the full configuration space $\R^{nN} \times \R^{nN}$ as well simply because the projection on the null space would send a set of positive measure to a set of positive measure. So, the set of points for which the conclusion fails must have measure $0$.

Let us note that for the quadratic potential $U = \frac12 r^2$ the dynamics on the null space coincides with  the confinement case. So, in general, the set of oscillations $\cH$ may have a positive measure.  We project, however, that the Generic 2-agent Conjecture will be valid for the system \eqref{e:CS2Z} as well. 

For $N=2$ the result of \thm{t:conf2} can be restated in a much more general form, and it has several interesting consequences that we will discuss below.

\begin{theorem}\label{t:inter2}
Let $N = 2$. Suppose that $\phi$ is a smooth radial kernel, and $U\in W^{2,\infty}_\loc(\R^n)$ is a radial potential $U = U(r) \geq 0$ such that 
\begin{equation}\label{e:phiU}
U'(r) \phi'(r) \leq 0,\quad  \forall r\geq 0,
\end{equation}
and for all $R>0$ there exists $c_R>0$ such that 
\begin{equation}\label{e:U'U}
|U'(r)|^2 \geq c_R |U(r)|, \qquad \forall  r<R.
\end{equation}
Then for almost every initial datum  $(\bx_0,\bv_0) \in \R^{2n} \times \R^{2n} \backslash \cH$ the solution to the system \eqref{e:CSconf}  satisfies
\[
U(x_1 - x_2) + |v_1 - v_2| \to 0.
\]
\end{theorem}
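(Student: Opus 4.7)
The plan is to parallel the modified-energy argument used in \thm{t:conf2}, but with a cross-term tailored to leverage both hypotheses \eqref{e:phiU} and \eqref{e:U'U}. First I would pass to the pair variables $x_{12} = x_1-x_2$, $v_{12} = v_1-v_2$, which satisfy
\begin{equation*}
\dot{x}_{12} = v_{12}, \qquad \dot{v}_{12} = -\phi_{12}\, v_{12} - \n U(x_{12}),
\end{equation*}
with pair energy $E_{12} = \tfrac12|v_{12}|^2 + U(x_{12})$ obeying $\ddt E_{12} = -\phi_{12}|v_{12}|^2$. Monotonicity of $E_{12}$ together with $U(r)\to \infty$ keeps the trajectory inside a bounded sublevel set on which $|x_{12}|$, $|v_{12}|$, $|\n U(x_{12})|$ and $|D^2 U(x_{12})|$ are uniformly bounded. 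By the extension of \prop{p:vol0pot} to the interaction system \eqref{e:CS2Z} noted by the author just above the theorem, for a.e.\ initial data outside $\cH$ we have $\int_0^\infty \phi_{12}\ds = \infty$.

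The key change from \thm{t:conf2} is to replace the correlator $\phi_{12}(x_{12}\cdot v_{12})$ by
\begin{equation*}
\chi = \phi_{12}\, \n U(x_{12})\cdot v_{12}.
\end{equation*}
A direct computation, using $\n U(x_{12}) = U'(r)x_{12}/r$ with $r = |x_{12}|$, yields
\begin{equation*}
\dot{\chi} = \frac{\phi'(r)\,U'(r)}{r^2}(x_{12}\cdot v_{12})^2 + \phi_{12}\, v_{12}^T D^2 U(x_{12})\, v_{12} - \phi_{12}^2\,\n U\cdot v_{12} - \phi_{12}|\n U(x_{12})|^2.
\end{equation*}
The first term has sign $\leq 0$ by hypothesis \eqref{e:phiU} and can be dropped. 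The second is bounded in modulus by $\|D^2 U\|_\infty \phi_{12}|v_{12}|^2$ on the confined region. For the third, Young's inequality together with $\phi_{12}^2 \leq \|\phi\|_\infty\phi_{12}$ absorbs half of the good $-\phi_{12}|\n U|^2$ term, leaving
\begin{equation*}
\dot\chi \leq C_1\,\phi_{12}|v_{12}|^2 - \tfrac12\, \phi_{12}|\n U(x_{12})|^2
\end{equation*}
for a constant $C_1$ depending only on $\|D^2 U\|_\infty$ and $\|\phi\|_\infty$.

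Setting $\tilde{E} = E_{12} + \e\chi$ with $\e > 0$ small enough that $\e C_1 < \tfrac12$, and invoking \eqref{e:U'U} in the form $|\n U(x_{12})|^2 = |U'(r)|^2\geq c_R U(r)$ on the bounded region, I obtain
\begin{equation*}
\ddt\tilde{E} \leq -c_0\,\phi_{12}\, E_{12}
\end{equation*}
for some $c_0>0$. Since $\chi$ is uniformly bounded along the trajectory, $\tilde{E}$ is bounded below, so integrating gives $\int_0^\infty \phi_{12}E_{12}\,\ds < \infty$. Because $E_{12}$ is non-increasing its limit $L$ exists; if $L > 0$ then $\int_0^\infty \phi_{12}E_{12}\,\ds \geq (L/2)\int^\infty \phi_{12}\ds = \infty$, contradiction. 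Hence $E_{12}\to 0$, from which $U(x_{12})\leq E_{12}\to 0$ and $|v_{12}|^2\leq 2E_{12}\to 0$, proving the theorem.

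The main obstacle is choosing the correlator $\chi$ correctly. The confinement-case choice $\phi_{12}(x_{12}\cdot v_{12})$ generates a term $-\phi_{12}\,U'(r)\,r$, controllable by $U(r)$ only under a Poincar\'e-type bound $U'(r)r \geq c U(r)$ that holds for $U = r^2/2$ but fails for interesting 3-Zone-type potentials. Swapping $x_{12}$ for $\n U(x_{12})$ trades this problematic term for $-\phi_{12}|\n U|^2$, which is controlled by $U$ precisely under \eqref{e:U'U}; meanwhile the sign assumption \eqref{e:phiU} is exactly what is needed to kill the otherwise indefinite $\phi'U'$ term produced by this substitution. The two hypotheses thus enter in complementary roles.
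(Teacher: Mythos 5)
Your proposal is correct and follows essentially the same route as the paper: the same cross-term $\chi = \phi_{12}\,\n U(x_{12})\cdot v_{12}$, with \eqref{e:phiU} killing the $\phi'U'$ term and \eqref{e:U'U} converting $|\n U|^2$ into $U$, together with the extension of \prop{p:vol0pot} giving $\int_0^\infty \phi_{12}\,\ds = \infty$ off $\cH$. The only (harmless) deviation is the endgame: you integrate $\ddt\tilde{E}\leq -c_0\phi_{12}E_{12}$ and use monotonicity of $E_{12}$ to force its limit to zero, whereas the paper closes via \GL{} on $\tilde{E}$ and then sends $\e\to 0$; both are valid.
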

\begin{proof}
The proof is almost identical to that of \thm{t:conf2}. In view of the oddness of $\n U$ we have
\[
\dot{v}_{12} = - \phi_{12} v_{12} - \n U(x_{12}), \quad \dot{x}_{12} = v_{12},
\]
thus,
\[
\ddt( |v_{12}|^2+ U(x_{12}) ) = - 2 \phi_{12} |v_{12}|^2.
\]
Define
\[
\chi = \phi_{12} \n U(x_{12}) \cdot v_{12}.
\]
Then
\begin{multline*}
\dot{\chi} =  2 \phi'(|x_{12}|)  U'(|x_{12}|) \frac{ (x_{12} \cdot v_{12})^2}{|x_{12}|^2}  -  \phi_{12} |\n U(x_{12})|^2 \\- \phi^2_{12} \n U(x_{12}) \cdot v_{12}+  \phi_{12} D^2 U(x_{12}) v_{12} \cdot v_{12}.
\end{multline*}
where we used that  $\n U(x_{12}) = U'(|x_{12}|) \frac{x_{12}}{|x_{12}|}$. In view of \eqref{e:phiU} the first term is non-positive, and can be dropped. The term $- \phi^2_{12} \n U(x_{12}) \cdot v_{12}$ is treated as earlier. And in the last term we simply use the boundedness of $D^2U$ on bounded sets. So, we obtain
\[
\dot{\chi} \leq c \phi_{12} |v_{12}|^2 - \frac12  \phi_{12} |\n U(x_{12})|^2.
\]
So, for any $\e < \e_0$ we have
\[
\ddt( |v_{12}|^2+\e \chi +  U(x_{12}) ) \leq  - c \e \phi_{12} ( |v_{12}|^2 + |\n U(x_{12})|^2).
\]
In view of  \eqref{e:U'U} we have
\[
|v_{12}|^2+\e \chi +  U(x_{12}) \lesssim  |v_{12}|^2 + |\n U(x_{12})|^2,
\]
and so
\[
\ddt( |v_{12}|^2+\e \chi +  U(x_{12}) ) \leq  - c \e \phi_{12} ( |v_{12}|^2+\e \chi +  U(x_{12}) ).
\]
So, by \GL,
\[
\limsup_{t\to \infty} [ |v_{12}|^2+\e \chi +  U(x_{12}) ]  \leq 0, \quad \forall \e<\e_0.
\]
Notice, however, that $\chi$ is uniformly bounded in time thanks to the fact that the diameter remains bounded. So, from the above
\[
\limsup_{t\to \infty} [ |v_{12}|^2+\e \chi +  U(x_{12}) ]  \geq \limsup_{t\to \infty} [ |v_{12}|^2+  U(x_{12}) ]  - c \e.
\]
So,
\[
\limsup_{t\to \infty} [ |v_{12}|^2+  U(x_{12}) ] \leq c \e, \quad \forall \e<\e_0,
\]
and the theorem follows.
\end{proof}

\thm{t:inter2} describes ultimate outcome for solutions in many various scenarios. Let us discuss a few.

To start, let us note that the assumption \eqref{e:U'U} simply means that near any zero point $U(\ell) = 0$ the order of contact is at most quadratic, $U(r) \sim |r-\ell|^\b$, $\b \leq 2$. Otherwise, $U$ has no critical points. 

So, in our first scenario we assume that $U(r) = (r-\ell_0)_+^2$ for $r<\ell_0 + \d$, and otherwise $U$ is monotonely increasing. Hence, \eqref{e:U'U} holds. Let us also assume that $\phi$ is monotonely decreasing to zero on $r\leq r_0$ (here $r_0$ and $\ell_0$ are in no relation to each other).  Then \eqref{e:phiU} also holds.  The result implies that in this situation generically, 
\[
(|x_1- x_2| - \ell_0)_+ + |v_1 - v_2| \to 0.
\]
So, the separation between agents decreases to a value $\leq \ell_0$, and the agents align. Moreover, if $\ell_0 < r_0$, then for a generic solution starting from some time $t^*$ we have $\phi_{12} \geq \d >0$. We can then conclude $1/t$-decay rate of the energy. Indeed, let us modify the proof of \thm{t:inter2} slightly, where we set $\e = \e(t) \leq \e_0$ to be a decreasing function of time with $|\e'| \leq c \e^2$. Then from time $t^*$ we have
\begin{equation*}\label{}
\begin{split}
\ddt( |v_{12}|^2+\e \chi +  U(x_{12}) ) & \leq  - c  \e(t) ( |v_{12}|^2 + |\n U(x_{12})|^2) + \e' \chi  \\
& \leq - c  (\e(t) - \e^2(t)) ( |v_{12}|^2 + |\n U(x_{12})|^2)  \\
&\leq  - \frac12 c  \e(t) (|v_{12}|^2+\e \chi +  U(x_{12}))  .
\end{split}
\end{equation*}
By \GL, we have
\[
 |v_{12}|^2+ U(x_{12}) \leq e^{-c \int_{t^*}^t \e(\t)\dtau} + C \e(t).
 \]
 The optimal rate is obtained with the choice $\e = \frac{1}{c t}$. 
 
 This represents an extension on the results obtained in \cite{ShuT2019anti} for global communication kernels to a completely local case, albeit only for a two agent system. Let us summarize it.
 
 \begin{corollary} Let $N=2$.
Suppose $U$ is an attraction potential described above. Then  for almost every initial datum  $(\bx_0,\bv_0) \in \R^{2n} \times \R^{2n} \backslash \cH$
\[
(|x_1- x_2| - \ell_0)_+ + |v_1 - v_2| \to 0.
\]
Moreover, if $\ell_0 < r_0$, then there exists $C>0$ such that 
\[
(|x_1- x_2| - \ell_0)_+ + |v_1 - v_2| \leq \frac{C}{\sqrt{1+t}}, \quad t\geq 0.
\]
The above results hold generically for any initial data with $|x_1^0 - x_2^0| < r_0$. 
\end{corollary}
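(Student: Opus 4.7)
The plan is to obtain the corollary as a direct specialization of \thm{t:inter2}, plus a rate refinement when $\ell_0<r_0$, and then to identify the initial data set in question as a subset of $\R^{2n}\times\R^{2n}\setminus\cH$.

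First I would verify the structural hypotheses \eqref{e:phiU} and \eqref{e:U'U} of \thm{t:inter2} for the given pair $(\phi,U)$. Radial monotonicity gives $\phi'\le 0$ and $U'\ge 0$, so $U'\phi'\le 0$. For \eqref{e:U'U}, in the region $r\le \ell_0+\d$ the identity $U(r)=(r-\ell_0)_+^2$ gives $|U'(r)|^2=4U(r)$, while on any compact interval $[\ell_0+\d,R]$ both $U$ and $|U'|^2$ are strictly positive and continuous, so the ratio $|U'|^2/U$ has a positive lower bound by compactness. \thm{t:inter2} then yields $U(x_1-x_2)+|v_1-v_2|\to 0$ for a.e. datum outside $\cH$. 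Since $U$ vanishes on $\{r\le \ell_0\}$ and is strictly positive outside, and $|x_1-x_2|$ stays in a bounded set by the energy bound, $U(x_{12})\to 0$ is equivalent to $(|x_1-x_2|-\ell_0)_+\to 0$, yielding the first conclusion. The last sentence of the corollary is immediate: an initial datum with $|x_1^0-x_2^0|<r_0$ cannot belong to $\cH$, which by definition demands separation $\ge r_0$ at every time $t\ge 0$, including $t=0$; hence the generic hypothesis is satisfied on the entire open set $\{|x_1^0-x_2^0|<r_0\}$.

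For the rate in the case $\ell_0<r_0$, note that the qualitative result already produces a time $t^*$ with $|x_1(t)-x_2(t)|\le (\ell_0+r_0)/2<r_0$ for all $t\ge t^*$, so radial monotonicity of $\phi$ gives a uniform lower bound $\phi_{12}(t)\ge \d_0>0$ on $[t^*,\infty)$. I would then repeat the energy estimate of \thm{t:inter2} with a time-dependent weight $\e(t)$ in the auxiliary functional $\cF(t):=|v_{12}|^2+U(x_{12})+\e(t)\chi$. The same cancellations as in that proof, combined with $\phi_{12}\ge \d_0$ and $|U'|^2\gtrsim U$ on bounded sets, give
\[
\ddt \cF \le -c\,\e(t)\,\cF + |\dot{\e}(t)|\,|\chi|.
\]
Taking $\e(t)=a/t$ with $a$ sufficiently large makes the correction term $O(1/t^2)$, and solving this inequality in integral form produces $\cF(t)\lesssim 1/t$. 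Exploiting $\cF\sim |v_{12}|^2+U(x_{12})$ for $\e$ small, together with $\sqrt{U(r)}=(r-\ell_0)_+$ near $\ell_0$, then gives the claimed $1/\sqrt{1+t}$ bound.

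The step I expect to be the most delicate is the absorption of the $\dot{\e}\chi$ correction generated by the time-dependent weight. Rather than chaining \GL\ in its differential form (which would be circular, since the very decay of $\cF$ is what needs to absorb the correction), one must invoke the integral version of \GL\ and check that the forcing contribution $\int_{t^*}^t e^{-c\int_s^t \e(\tau)\dtau}|\dot{\e}(s)|\,|\chi(s)|\,ds$ is indeed of order $1/t$; this forces the constant $a$ in $\e(t)=a/t$ to be chosen large enough that $ca>1$, so that the inner kernel $(s/t)^{ca}$ makes the integral convergent with the correct decay.
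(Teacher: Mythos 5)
Your proposal is correct and follows essentially the same route as the paper: verify hypotheses \eqref{e:phiU} and \eqref{e:U'U} to invoke \thm{t:inter2}, note that $|x_1^0-x_2^0|<r_0$ already excludes membership in $\cH$, and rerun the modified-energy estimate with a time-dependent weight $\e(t)\sim 1/t$ on $[t^*,\infty)$ where $\phi_{12}\ge\d_0$ to get the $1/t$ energy decay. Your explicit treatment of the $\dot{\e}\chi$ correction via the integral form of \GL\ (with $ca>1$) is in fact more careful than the paper's sketch, which drops that term when differentiating $|v_{12}|^2+\e(t)\chi+U(x_{12})$.
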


It is notable that the condition $|x_1^0 - x_2^0| < r_0$ imposes no restriction on the initial velocities. So, for example, one can send the agents in almost opposite directions with fast momenta, where they surely separate and  travel long distances vastly exceeding their $\phi$-communication zones. Yet, under the attraction force with probability $1$ they still come back and eventually align.

Our second example pertains to the attraction-alignment-repulsion 3Zone model.  Suppose that for some $\ell_0 \leq \ell_1$
\begin{equation}\label{}
U(r) = \left\{
\begin{split}
\text{smooth and decreasing,} & \quad r_0 \leq \frac12\ell_0,\\
 |r-\ell_0|^2,  & \quad \frac12\ell_0 \leq r \leq \ell_0,\\
 0, & \quad  \ell_0 \leq r \leq \ell_1,\\
 |r-\ell_1|^2,  & \quad r \geq \ell_1,
\end{split}\right.
\end{equation}
In the range $r_0 \leq \frac12\ell_0$ we do not specify any exact rule as long as  $U\in C^2$ near the origin as a function on $\R^n$ and $U$ is radially decreasing.  Next, suppose that $\phi'(r) \geq 0$ for $r\leq \ell_0$, and $\phi'(r) \leq 0$ for $r \geq \ell_1$. In particular, $\phi$ can be simply non-increasing everywhere and is a constant in the range $r\leq \ell_0$. Then \thm{t:inter2} applies.

 \begin{corollary}\label{c:3Z} Let $N=2$.
Suppose $U,\phi$ are as described above. Then  for almost every initial datum  $(\bx_0,\bv_0) \in \R^{2n} \times \R^{2n} \backslash \cH$
\[
\dist\{ |x_1- x_2|, [\ell_0,\ell_1]\} + |v_1 - v_2| \to 0.
\]
Furthermore, if $r_0 > \ell_1$, the above convergence comes with the rate
\[
\dist\{ |x_1- x_2|, [\ell_0,\ell_1]\}  + |v_1 - v_2| \leq \frac{C}{\sqrt{1+t}}, \quad t\geq 0.
\]
In particular, the results hold generically for any initial data with $|x_1^0 - x_2^0| < r_0$.

\end{corollary}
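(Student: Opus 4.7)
The plan is to verify that the pair $(\phi,U)$ described in the corollary satisfies the hypotheses \eqref{e:phiU} and \eqref{e:U'U} of \thm{t:inter2}, to apply that theorem to obtain $U(x_1-x_2)+|v_1-v_2|\to 0$, and then to translate this conclusion into the advertised distance-to-well convergence. The rate in the regime $r_0>\ell_1$ will follow by the same time-dependent-weight device already sketched in the paragraph after the previous corollary.

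\textbf{Checking the hypotheses.} I split $[0,\infty)$ into three ranges. For $r<\ell_0$ one has $U'(r)\leq 0$ and, by assumption, $\phi'(r)\geq 0$; on the well $[\ell_0,\ell_1]$ the potential is constant so $U'(r)=0$; for $r>\ell_1$ one has $U'(r)\geq 0$ and $\phi'(r)\leq 0$. Thus $U'(r)\phi'(r)\leq 0$ throughout, giving \eqref{e:phiU}. For \eqref{e:U'U}, the assumed local form $U(r)=(r-\ell_j)^2$ near each well endpoint yields $|U'(r)|^2=4U(r)$ identically; inside $[\ell_0,\ell_1]$ both sides vanish; on any compact subset $r<R$ disjoint from a neighborhood of the well, $U>0$ and (by monotonicity) $U'\neq 0$, so the inequality holds by compactness. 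Patching the three estimates produces the claimed constant $c_R>0$.

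\textbf{Applying \thm{t:inter2} and translating.} The theorem supplies
\[
U(x_1-x_2)+|v_1-v_2|\to 0
\]
for almost every initial datum in $\R^{2n}\times\R^{2n}\setminus\cH$. Since $U$ is continuous, coercive, and vanishes exactly on the interval $[\ell_0,\ell_1]$, its sub-level sets $\{U\leq\eta\}$ shrink to $[\ell_0,\ell_1]$ as $\eta\to 0$; hence $U(x_{12})\to 0$ is equivalent to $\dist\{|x_{12}|,[\ell_0,\ell_1]\}\to 0$. The final clause about initial data with $|x_1^0-x_2^0|<r_0$ is immediate, since such data sit outside $\cH$ by the very definition of $\cH$ (one would need $|x_1-x_2|\geq r_0$ for all $t\geq 0$, and this already fails at $t=0$).

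\textbf{The rate.} When $r_0>\ell_1$, the distance convergence forces $|x_{12}(t)|<r_0$ for $t$ past some $t^{\ast}$, and hence $\phi_{12}(t)\geq\delta>0$ on $[t^{\ast},\infty)$. I then mimic the proof of \thm{t:inter2} but with $\varepsilon=\varepsilon(t)\leq\varepsilon_0$ a slowly decreasing function; the additional term $\dot{\varepsilon}\,\chi$ produced by the time-dependence of the weight is absorbed using the uniform boundedness of $\chi$ on the compact sub-level set, yielding
\[
\ddt\bigl(|v_{12}|^2+\varepsilon(t)\chi+U(x_{12})\bigr)\leq -c\,\varepsilon(t)\bigl(|v_{12}|^2+\varepsilon(t)\chi+U(x_{12})\bigr)
\]
on $[t^{\ast},\infty)$. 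Choosing $\varepsilon(t)=1/(ct)$ and applying \GL\ gives $|v_{12}|^2+U(x_{12})\lesssim 1/(1+t)$, which combined with the quadratic form $U(r)\sim\dist\{r,[\ell_0,\ell_1]\}^2$ near the well produces the stated $C/\sqrt{1+t}$ bound. I expect the only delicate point to be this last step: the $\dot\varepsilon\,\chi$ contribution must be shown to be of strictly lower order than the main dissipation $\varepsilon(t)(|v_{12}|^2+U(x_{12}))$, so that the optimal choice $\varepsilon(t)=1/(ct)$ correctly balances the algebraic decay against the Gr\"onwall-exponential contribution. The hypothesis checks and the passage from $U\to 0$ to $\dist\to 0$ are otherwise routine.
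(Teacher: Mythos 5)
Your proposal follows the paper's own route exactly: verify \eqref{e:phiU} and \eqref{e:U'U} for the 3Zone pair $(\phi,U)$, invoke \thm{t:inter2}, use the quadratic contact of $U$ at the well endpoints to convert $U(x_{12})\to 0$ into the distance statement, and rerun the modified-energy estimate with a decreasing weight $\e(t)=1/(ct)$ to get the $1/\sqrt{1+t}$ rate when $r_0>\ell_1$ (the paper glosses over the $\dot\e\chi$ term just as briefly as you do, and your observation that it is lower order is the correct fix). This matches the paper's argument, with your hypothesis check if anything slightly more explicit than the paper's.
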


A striking application can be seen in the case when $\ell_0 = \ell_1$ and $r_0 > \ell_0$. This corresponds to another situation addressed in \cite{ShuT2019anti} where it was shown that 
\[
||x_1- x_2| - \ell_0| \lesssim \frac{\ln^{1/2}(t)}{t}, \quad   |v_1 - v_2| \lesssim  \frac{1}{\sqrt{t}},
\]
provided initially 
\begin{equation}\label{e:potequi}
||x^0_1- x^0_2| - \ell_0| +  |v^0_1 - v^0_2| <\e,
\end{equation}
and the communication range is infinite, $r_0 = \infty$. We can see that this result holds globally and even for a finite $r_0$ by \cor{c:3Z}. Near the potential equilibrium \eqref{e:potequi}  the condition $|x_1^0 - x_2^0| < r_0$ is satisfied automatically and without any requirement on velocities.

\section{Statements and Declarations}

Data sharing is not applicable to this article as no datasets were generated or analyzed during the current study.

This work was  supported in part by NSF grant  DMS-2107956.


\begin{thebibliography}{10}

\bibitem{ABFHKPPS}
G.~Albi, N.~Bellomo, L.~Fermo, S.-Y. Ha, J.~Kim, L.~Pareschi, D.~Poyato, and
  J.~Soler.
\newblock Vehicular traffic, crowds, and swarms: {F}rom kinetic theory and
  multiscale methods to applications and research perspectives.
\newblock {\em Math. Models Methods Appl. Sci.}, 29(10):1901--2005, 2019.

\bibitem{CFRT2010}
J.~A. Carrillo, M.~Fornasier, J.~Rosado, and G.~Toscani.
\newblock Asymptotic flocking dynamics for the kinetic {C}ucker-{S}male model.
\newblock {\em SIAM J. Math. Anal.}, 42(1):218--236, 2010.

\bibitem{CS2007a}
F.~Cucker and S.~Smale.
\newblock Emergent behavior in flocks.
\newblock {\em IEEE Trans. Automat. Control}, 52(5):852--862, 2007.

\bibitem{CS2007b}
F.~Cucker and S.~Smale.
\newblock On the mathematics of emergence.
\newblock {\em Jpn. J. Math.}, 2(1):197--227, 2007.

\bibitem{DS2019}
H.~Dietert and R.~Shvydkoy.
\newblock On {C}ucker-{S}male dynamical systems with degenerate communication.
\newblock {\em Anal. Appl. (Singap.)}, 19(4):551--573, 2019.

\bibitem{HL2009}
S.-Y. Ha and J.-G. Liu.
\newblock A simple proof of the {C}ucker-{S}male flocking dynamics and
  mean-field limit.
\newblock {\em Commun. Math. Sci.}, 7(2):297--325, 2009.

\bibitem{HT2008}
S.-Y. Ha and E.~Tadmor.
\newblock From particle to kinetic and hydrodynamic descriptions of flocking.
\newblock {\em Kinet. Relat. Models}, 1(3):415--435, 2008.

\bibitem{MT2014}
S.~Motsch and E.~Tadmor.
\newblock Heterophilious dynamics enhances consensus.
\newblock {\em SIAM Rev.}, 56(4):577--621, 2014.

\bibitem{nadkarni}
M.~G. Nadkarni.
\newblock {\em Basic ergodic theory}, volume~6 of {\em Texts and Readings in
  Mathematics}.
\newblock Hindustan Book Agency, New Delhi, third edition, 2013.

\bibitem{Oh2022}
Hee Oh.
\newblock Euclidean traveller in hyperbolic worlds.
\newblock {\em Notices Amer. Math. Soc.}, 69(11):1888--1897, 2022.

\bibitem{ShuT2019anti}
R.~Shu and E.~Tadmor.
\newblock Anticipation breeds alignment.
\newblock {\em Arch. Ration. Mech. Anal.}, 240(1):203--241, 2019.

\bibitem{ShuT2019}
R.~Shu and E.~Tadmor.
\newblock Flocking hydrodynamics with external potentials.
\newblock {\em Arch. Ration. Mech. Anal.}, 238(1):347--381, 2020.

\bibitem{Sbook}
Roman Shvydkoy.
\newblock {\em Dynamics and analysis of alignment models of collective
  behavior}.
\newblock Ne\v{c}as Center Series. Birkh\"{a}user/Springer, Cham, [2021]
  \copyright 2021.

\bibitem{S-hypo}
Roman Shvydkoy.
\newblock Global hypocoercivity of kinetic {F}okker-{P}lanck-{A}lignment
  equations.
\newblock {\em Kinet. Relat. Models}, 15(2):213--237, 2022.

\bibitem{TT2014}
E.~Tadmor and C.~Tan.
\newblock Critical thresholds in flocking hydrodynamics with non-local
  alignment.
\newblock {\em Philos. Trans. R. Soc. Lond. Ser. A Math. Phys. Eng. Sci.},
  372(2028):20130401, 22, 2014.

\bibitem{Tadmor-notices}
Eitan Tadmor.
\newblock On the mathematics of swarming: emergent behavior in alignment
  dynamics.
\newblock {\em Notices Amer. Math. Soc.}, 68(4):493--503, 2021.

\end{thebibliography}

\end{document}